\theoremstyle{plain}
\newtheorem{theorem}{Theorem}
\newtheorem{lemma}{Lemma}
\newtheorem{corollary}{Corollary}
\theoremstyle{definition}
\theoremstyle{remark}
\newtheorem{remark}{Remark}
\newtheorem{example}{Example}
\numberwithin{equation}{section}
 \newcommand{\ol}{\overline}
 \newcommand{\wh}{\widehat}
 \newcommand{\wt}{\widetilde}
 \newcommand{\wb}{\overline}
 \newcommand{\upwt}{\widetilde{\phantom{a}}}
 \newcommand{\scra}{\mathscr{A}}
\newcommand{\scrb}{\mathscr{B}}
\newcommand{\scrk}{\mathscr{K}}
 \newcommand{\setC}{\mathbb{C}}
 \newcommand{\setN}{\mathbb{N}}
 \newcommand{\setR}{\mathbb{R}}
 \newcommand{\ve}{\varepsilon}
 \newcommand{\vr}{\varrho}
 \DeclareMathOperator{\id}{id}
 \DeclareMathOperator{\re}{Re}
 \DeclareMathOperator{\imp}{Im}
 \DeclareMathOperator{\pa}{\partial}
 \DeclareMathOperator{\GL}{GL}
\begin{document}

\title[Equations in Dirichlet series and Laplace transforms]
{General Dirichlet series, arithmetic\\[1ex]
convolution equations and\\[1ex]
Laplace transforms}

\author{Helge Gl\"{o}ckner}
\author{Lutz G. Lucht}
\author{\v{S}tefan Porubsk\'{y}}

\thanks{The first author was supported by the Deutsche
Forschungsgemeinschaft, grants GL 357/2-1
and GL 357/5-1.}

\thanks{The second author is indebted to the Grant Agency of the Czech 
Republic, Grant \#~201/07/0191, for support.}

\thanks{The third author was supported by the Grant Agency of the Czech
Republic, Grant~\#~201/07/0191, and by the Institutional Research
Plan AV0Z10300504. For some of the period during which this work
was carried out he also thanks the Clausthal University of
Technology for support.}

\address{Institut f\"{u}r Mathematik\\
Universit\"{a}t Paderborn\\
Warburger Str.\ 100\\
33098 Paderborn\\
Germany}
\email{glockner@math.upb.de}

\address{Institute of Mathematics\\
Clausthal University of Technology\\
Erzstr. 1\\
38678 Clausthal-Zellerfeld\\
Germany}
\email{lucht@math.tu-clausthal.de}

\address{Institute of Computer Science\\
Academy of Sciences of the Czech Republic\\
Pod Vod\'{a}renskou v\v{e}\v{z}\'{\i} 2\\
182~07 Prague 8\\
Czech Republic}
\email{Stefan.Porubsky@cs.cas.cz}

%\dedicatory{}

\keywords{Arithmetic function, Dirichlet convolution, polynomial equation,
analytic equation, topological algebra, holomorphic functional calculus,
implicit\linebreak
function theorem, Laplace transform, semigroup, complex measure}

\subjclass[2000]{11A25, 44A10, 46H30}

\date{December 4, 2007}

%%%%%%%%%%%%%%%%%%%%%%%%%%%%%%%%%%%%%%%%%%%%%%%%%%%%%%%%%%%%%%%%%%%%%%%%%%%%%

\begin{abstract}
In the earlier paper \cite{GLP2006}, we studied solutions
$g\colon\setN\to\setC$ to convolution equations of the form
\[
a_d*g^{*d}+a_{d-1}*g^{*(d-1)}+\cdots+a_1*g+a_0=0,
\]
where $a_0,\ldots,a_d\colon\setN\to\setC$ are given arithmetic functions
associated with Dirichlet series which converge on some right half plane, and
also $g$ is required to be such a function. In this article, we extend
our previous results to multidimensional general Dirichlet series of the
form $\sum_{x\in X}f(x)e^{-sx}$ ($s\in \setC^k$), where
$X\subseteq [0,\infty)^k$ is an additive subsemigroup. If $X$ is discrete and
a certain solvability criterion is satisfied, we determine solutions by an
elementary recursive approach, adapting an idea of Fe\v{c}kan \cite{Fec2006}.
The solution of the general case leads us to a more comprehensive question: 
Let $X$ be an additive subsemigroup of a pointed, closed convex cone
$C\subseteq \setR^k$. Can we find a complex Radon measure on $X$ whose
Laplace transform satisfies a given polynomial equation whose coefficients
are Laplace transforms of such measures\,?
\end{abstract}

\maketitle

%%%%%%%%%%%%%%%%%%%%%%%%%%%%%%%%%%%%%%%%%%%%%%%%%%%%%%%%%%%%%%%%%%%%%%%%%%%%%

\section{Results on general Dirichlet series and Laplace transforms}
\label{S1}

Let $k\in\setN$ and $X\subseteq[0,\infty)^k$ be an infinite additive 
semigroup of $k$-tuples $x=(x_1,\ldots,x_k)$ with $0=(0,\ldots,0)\in X$. 
Assume until further notice that $X$ is discrete (i.e., without a cluster 
point in $\setR^k$) and hence countable. In this situation, the set 
$\scra=\scra(X):=\setC^X$ of all arithmetic functions $g\colon X\to\setC$ is 
a commutative complex algebra, called \emph{Dirichlet algebra of $X$}, under 
the usual linear operations and the \emph{convolution} 
$*\colon\scra^2\to\scra$ as the algebra multiplication which for arbitrary 
functions $g,h\in\scra$ is defined by
\begin{equation*}
 \big(g*h\big)(x)=
 \sum_{\substack{x',\,x''\in X \\ x'+x''=x}}\,g(x')\,h(x'')
 \qquad(x\in X).
\end{equation*}
The unit element $u\in\scra$ under $*$ is given by $u(0)=1$ and $u(x)=0$ for
non-zero $x\in X$, and the \emph{multiplicative group} $\scra^*$ of $\scra$
with respect to the convolution consists of all functions $g\in\scra$
satisfying $g(0)\neq 0$. For brevity, we write $g^{*j}$ for the convolution 
$g*\cdots*g$ with $j$ factors $g\in\scra$, $g^{*0}:=u$, and $g^{-1}$ for the 
inverse of $g$\,, i.e., $g*g^{-1}=u$.

Given $g\in\scra$, consider the \emph{$k$-dimensional Dirichlet series}
\begin{equation} \label{eq1.1}
 \wt{g}(s)=\sum_{x\in X}\,g(x)\,e^{-x\cdot s} \qquad(s\in\setC^k)
\end{equation}
with the inner%
\footnote{Because $x$ is real, we omit complex conjugation.} 
product $x\cdot s=x_1 s_1+\cdots+x_k s_k$ for $x=(x_1,\ldots,x_k)\in X$ and 
$s=(s_1,\ldots,s_k)\in\setC^k$.  For fixed $X$, these multidimensional series 
form an algebra which is isomorphic to $\scra$ under $~\wt{}:g\mapsto\wt{g}$ 
for $\wt{g}(s)\cdot\wt{h}(s):=(g*h)\upwt(s)$.

The aim of this paper is to investigate existence and analytic behavior of
the solutions $g\in\scra$ to the equation $Tg=0$ for convolution
polynomials $T\colon\scra\to\scra$ of degree $d\in\setN$ defined by
\begin{equation} \label{poleq}
Tg=a_d*g^{*d}+a_{d-1}*g^{*(d-1)}+\cdots+a_1*g+a_0
\end{equation}
with given arithmetic functions $a_d,a_{d-1},\ldots,a_1,a_0\in\scra$,
$a_d\neq 0$.

Our first theorem concerns the existence of solutions $g\in\scra$ to $Tg=0$.

\begin{theorem} \label{T1}
For $T$ as in \eqref{poleq}, let the polynomial $f(z)\in\setC[z]$ be
defined by
\begin{equation} \label{initpol}
f(z)=a_d(0)\,z^d+a_{d-1}(0)\,z^{d-1}+\cdots+a_1(0)\,z+a_0(0).
\end{equation}
If $z_0$ is a simple zero of $f(z)$, then there exists a uniquely determined
solution $g\in\scra$ to $Tg=0$ satisfying $g(0)=z_0$. If $f(z)$ has no simple
zeros, then $Tg=0$ need not possess any solution. In any case $Tg=0$ has at
most $d$ solutions $g\in\scra$.
\end{theorem}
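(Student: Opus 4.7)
The plan is to solve $Tg=0$ recursively, reading off the values $g(x)$ one at a time by induction along the additive grading of $X$. Since $X\subseteq[0,\infty)^k$ is discrete, every level set $\{x\in X:x_1+\cdots+x_k\le r\}$ lies in a compact subset of $\setR^k$ and is therefore finite, allowing an enumeration $X=\{x_0,x_1,x_2,\ldots\}$ with $x_0=0$ ordered by non-decreasing weight $|x|:=x_1+\cdots+x_k$. Non-negativity of coordinates then ensures that a decomposition $x_n=y+y'$ with $y,y'\in X\setminus\{0\}$ forces $|y|,|y'|<|x_n|$, so $y$ and $y'$ occur at indices strictly smaller than $n$.

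Next I would evaluate $(Tg)(x_n)$ pointwise. At $x_0=0$ only the all-zero convolution tuples survive, yielding $(Tg)(0)=\sum_{j=0}^d a_j(0)g(0)^j=f(g(0))$, so every solution satisfies $f(g(0))=0$. For $n\ge1$, expanding $(a_j*g^{*j})(x_n)$ as a sum over tuples $(y_0,y_1,\ldots,y_j)\in X^{j+1}$ with $y_0+\cdots+y_j=x_n$, I isolate the terms containing the factor $g(x_n)$: this factor requires $y_i=x_n$ for some $i\ge1$, after which non-negativity forces the remaining entries to vanish, so each such $i$ contributes $a_j(0)g(0)^{j-1}g(x_n)$ and summing over $j$ produces $f'(g(0))\,g(x_n)$. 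All other contributions depend only on $a_0,\ldots,a_d$ and on the already-computed values $g(x_0),\ldots,g(x_{n-1})$, so
\[
(Tg)(x_n)=f'(g(0))\,g(x_n)+R_n
\]
with $R_n$ built from those inductively known data.

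If $z_0$ is a simple zero of $f$, then $f'(z_0)\ne0$ and the initialization $g(0)=z_0$ together with the recursion $g(x_n)=-R_n/f'(z_0)$ produces the unique $g\in\scra$ solving $Tg=0$ with $g(0)=z_0$. For the global bound of at most $d$ solutions I would first verify that $\scra$ is an integral domain — refining the enumeration lexicographically within each level set, if $g,h\in\scra\setminus\{0\}$ have minimal-support elements $x_g,x_h$ in this refined order, then addition-compatibility yields $(g*h)(x_g+x_h)=g(x_g)h(x_h)\ne0$ — whereupon $T(z)\in\scra[z]$ of degree $d$ admits at most $d$ roots in $\scra$ by the standard factor-theorem argument over a domain. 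Non-existence in the absence of simple zeros is then witnessed by a short explicit counterexample, e.g.\ $X=\{0,1,2,\ldots\}$ with $T(g)=g^{*2}-b$ and $b(0)=0$, $b(1)\ne0$: here $f(z)=z^2$ forces $g(0)=0$, yet $(Tg)(1)=0$ demands $2g(0)g(1)=b(1)\ne0$.

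The main technical obstacle I foresee is the combinatorial identity extracting $f'(z_0)$ as the coefficient of $g(x_n)$ in $(Tg)(x_n)$; once that is pinned down, the recursion, the uniqueness, the domain property with its root bound, and the counterexample are each one short step.
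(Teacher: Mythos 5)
Your proposal is correct and follows essentially the same route as the paper: order $X$ by size (breaking ties lexicographically), read off $f(g(0))=0$ at the origin, extract $f'(g(0))$ as the coefficient of $g(x)$ to obtain a well-founded recursion, invoke the integral-domain property for the bound of $d$ roots, and exhibit $g^{*2}=b$ with $b(0)=0\ne b$ at the least positive element as the non-solvable case. The only cosmetic differences are that the paper phrases the counterexample for an arbitrary minimal-size element $q\in X$ rather than for $X=\setN_0$, and that you spell out the (standard but unproved in the paper) verification that $\scra$ is an integral domain.
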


Section \ref{S2} contains an elementary proof extending that of 
\cite{GLP2006} from $\scra=\scra(\log{\setN})$ to $\scra(X)$ with
$X\subseteq [0,\infty)^k$ a general discrete additive semigroup.

Given $x=(x_1,\ldots, x_k)\in \setR^k$, define its \emph{size} as 
$|x|:=|x_1|+\cdots+|x_k|$. For the study of absolute convergence of Dirichlet 
series $\wt{g}(s)$ it is convenient to consider the Banach algebras $\scra_r$ 
of arithmetic functions $g\in\scra$ with bounded \emph{$r$-norm}
\begin{equation*}
\|g\|_r:=\sum_{x\in X}\,|g(x)|\,e^{-r|x|}
\end{equation*}
for $r\in\setR$, and also the algebra
\begin{equation*}
\scra_\infty:=\bigcup\,\big\{\scra_r\colon r\in\setR\big\}.
\end{equation*}
Let $H_r:=\big\{s\in\setC:\re{s}>r\big\}$ and $\wb{H}_r$ be its
closure. Given $g\in\scra_r$, the series $\wt{g}(s)$ converges
absolutely for $s\in\wb{H}_r^{\,k}$. We shall use the same symbol,
$\wt{g}(s)$, also for the value of the sum.

The next theorem guarantees that under suitable conditions solutions
$g\in\scra$ to $Tg=0$ belong to $\scra_\infty$, if
$a_d,\ldots,a_0\in\scra_\infty$.

\begin{theorem} \label{T2}
Suppose that $a_d,a_{d-1},\ldots,a_1,a_0\in\scra_\infty$ in \eqref{poleq}, 
with $a_d\neq 0$. If $z_0$ is a simple zero of $f(z)$ in \eqref{initpol}, 
then there exists a solution $g\in\scra_\infty$ to $Tg=0$ satisfying
$g(0)=z_0$.
\end{theorem}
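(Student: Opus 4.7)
The plan is to show that the unique $g\in\scra$ produced by Theorem~\ref{T1} actually lies in some $\scra_r$, by translating the equation $Tg=0$ into a fixed-point problem in $\scra_r$ and solving it by contraction for $r$ sufficiently large. Since $\scra_\infty=\bigcup_r\scra_r$, I first choose $r_0\in\setR$ so that $a_0,\ldots,a_d\in\scra_{r_0}$. Substituting $g=z_0 u+h$ and expanding the convolution powers gives
\[
T(z_0 u+h)=c_0+c_1*h+\sum_{i=2}^{d}c_i*h^{*i},\qquad c_i:=\sum_{j=i}^{d}\binom{j}{i}z_0^{j-i}\,a_j\in\scra_{r_0},
\]
with $c_0(0)=f(z_0)=0$ and $c_1(0)=f'(z_0)\neq 0$.

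The key technical estimate exploits discreteness of $X$. Because $0$ is not a cluster point of $X$, the quantity $\delta:=\inf\{|x|\colon x\in X,\,x\neq 0\}$ is strictly positive, so every $\phi\in\scra_{r_0}$ with $\phi(0)=0$ satisfies
\[
\|\phi\|_r=\sum_{x\neq 0}|\phi(x)|\,e^{-r|x|}\leq e^{-(r-r_0)\delta}\,\|\phi\|_{r_0}\qquad(r\geq r_0),
\]
which tends to $0$ as $r\to\infty$. Applied to $c_1-c_1(0)u$ together with the Neumann series, this makes $c_1$ invertible in $\scra_{r_1}$ for some $r_1\geq r_0$; applied to $c_0$, it gives $\|c_0\|_r\to 0$. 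Multiplying $T(z_0 u+h)=0$ by $c_1^{-1}\in\scra_{r_1}$ turns the equation into the fixed-point problem
\[
h=F(h):=-c_1^{-1}*c_0-c_1^{-1}*\sum_{i=2}^{d}c_i*h^{*i}.
\]
For $r\geq r_1$ the norms $\|c_1^{-1}\|_r$ and $\|c_i\|_r$ remain bounded (they are non-increasing in $r$), while $\|F(0)\|_r\leq\|c_1^{-1}\|_{r_1}\|c_0\|_r\to 0$ and the operator norm of $DF(h)$ is $O(\|h\|_r)$ uniformly in $r$. Hence, for any sufficiently small $\rho>0$ and then $r$ sufficiently large, $F$ is a contraction of the closed $\|\cdot\|_r$-ball of radius $\rho$ into itself, and Banach's theorem supplies a fixed point $h_r\in\scra_r$.

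Setting $g_r:=z_0 u+h_r\in\scra_r\subseteq\scra$ yields $Tg_r=0$. By choosing $\rho$ small enough, $g_r(0)=z_0+h_r(0)$ lies arbitrarily close to $z_0$; since the simple zeros of $f$ are isolated in $\setC$, the value $g_r(0)$ must coincide with $z_0$. The uniqueness part of Theorem~\ref{T1} then identifies $g_r$ with the $g\in\scra$ of that theorem, proving $g\in\scra_r\subseteq\scra_\infty$. The decisive step I anticipate is the quantitative decay $\|\phi\|_r\leq e^{-(r-r_0)\delta}\|\phi\|_{r_0}$ for $\phi(0)=0$: this is the mechanism by which discreteness of $X$ makes the obstruction $c_0$ arbitrarily small and permits Neumann inversion of $c_1$, and without it the contraction radius on the ball in $\scra_r$ would be illusory.
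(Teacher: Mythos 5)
Your proof is correct, and it reaches the conclusion by a genuinely different packaging of the same underlying idea. The paper's proof (Section~\ref{S3}) stays deliberately elementary: it sets $g_r(x)=e^{-r|x|}g(x)$, reads off the recursion for $f'(z_0)\,g_r(x)$ from \eqref{rec}, and bounds the partial sums $S_r(m_n)=\sum_{0<|x|\le m_n}|g_r(x)|$ by the recursive inequality
\begin{equation*}
S_r(m_n)\ \le\ P\bigl(S_r(m_{n-1})\bigr)+e^{-rm_1}\,Q\bigl(|z_0|+S_r(m_{n-1})\bigr)
\end{equation*}
with explicit polynomials $P,Q$ satisfying $P(0)=P'(0)=0$, then chooses $r$ and $t$ so that $P(t)+e^{-rm_1}Q(|z_0|+t)\le t$, yielding uniform boundedness of $S_r(m_n)$ and hence $g\in\scra_r$. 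Your $\delta$ is exactly the paper's $m_1$, and your inequality $\|\phi\|_r\le e^{-(r-r_0)\delta}\|\phi\|_{r_0}$ for $\phi(0)=0$ is the same decay that produces the factor $e^{-rm_1}$ in $\Sigma_2$. What you do differently is to recast the problem as a Banach fixed-point problem in $\scra_r$: you Taylor-expand around $z_0 u$, invert $c_1$ by a Neumann series, and make $F$ a contraction on a small $\|\cdot\|_r$-ball. The Neumann inversion is an extra step the paper avoids, since by isolating the diagonal values $a_j(0)$ in the recursion it only divides by the scalar $f'(z_0)$, never by the full convolution operator $c_1*(\cdot)$. The paper's route buys explicit quantitative constants (see Remark~\ref{R1}); yours buys conceptual economy and makes transparent that the result is a contraction-mapping statement for the scale $\{\scra_r\}_{r\in\setR}$. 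One small imprecision: to force $g_r(0)=z_0$ you should appeal to the zeros of $f$ being isolated (they all are, $f$ being a nonzero polynomial), not specifically to the simple ones. A cleaner alternative is to evaluate the fixed-point equation at $x=0$: since $c_0(0)=0$, it reads $h_r(0)=-\sum_{i\ge 2}\bigl(c_i(0)/c_1(0)\bigr)h_r(0)^i$, whose only solution in a sufficiently small disc is $h_r(0)=0$, so $g_r(0)=z_0$ with no detour through root separation.
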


In Section \ref{S3} we give an elementary proof of Theorem \ref{T2} 
extending that of Fe\v{c}kan \cite{Fec2006} in the case 
$\scra=\scra(\log{\setN})$ to the multidimensional case $\scra(X)$.

As in \cite{GLP2006}, Theorem \ref{T2} is the special case
\begin{equation*}
 m=1,\quad
 F\colon\setC^{d+1}\times\setC\to\setC,\quad
 F(w_0,\ldots,w_d,z)=\sum_{j=0}^d\,w_j\,z^j
\end{equation*}
of a multidimensional version:

\begin{theorem} \label{T3}
For open subsets $V\subseteq\setC^n$ and $Z\subseteq\setC^m$, let
\begin{equation*}
F\colon V\times Z\to\setC^m,\quad (v,z)\mapsto F(v,z)
\end{equation*}
be a holomorphic function and $(v_0,z_0)\in V\times Z$ be such that 
$F(v_0,z_0)=0$. Let $a_1,\ldots, a_n\in\scra=\scra(X)$ satisfy the condition 
$\big(a_1(0),\ldots,a_n(0)\big)=v_0$. If the differential $\pa_z{F(v_0,z)}$ 
at $z=z_0$ is in $\GL_m(\setC)$, then there exists a unique $m$-tuple 
$(g_1,\ldots,g_m)\in\scra^m$ such that
\begin{align}
 \big(g_1(0),\ldots,g_m(0)\big)   &\,=\,z_0\quad\text{and} \label{cond1} \\
 F[a_1,\ldots,a_n,g_1,\ldots,g_m] &\,=\,0. \label{eq1.5}
\end{align}
If, in addition, $a_1,\ldots,a_n\in\scra_\infty$, then also
$g_1,\ldots,g_m\in\scra_\infty$ and both
\begin{equation}\label{condpre2}
 \big(\wt{a}_1(s),\ldots,\wt{a}_n(s),\wt{g}_1(s),\ldots,\wt{g}_m(s)\big)
 \in V\times Z
\end{equation}
and
\begin{equation} \label{cond2}
 F\big(\wt{a}_1(s),\ldots,\wt{a}_n(s),
 \wt{g}_1(s),\ldots,\wt{g}_m(s)\big)\,=\,0
\end{equation}
hold for all $s\in\ol{H}_r^{\,k}$, with $r$ sufficiently large. In this case,
also \eqref{cond1}, \eqref{condpre2} and \eqref{cond2} uniquely determine
$(g_1,\ldots,g_m)\in\scra_\infty^m$.
\end{theorem}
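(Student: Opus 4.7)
\textbf{Proof plan for Theorem \ref{T3}.}
The plan is to argue in two stages: first, construct a unique $(g_1,\ldots,g_m)\in\scra^m$ satisfying \eqref{cond1}--\eqref{eq1.5} by a coefficient-by-coefficient induction that uses only the invertibility of $\partial_zF(v_0,z_0)$; second, upgrade this formal solution to $\scra_\infty^m$ and derive \eqref{condpre2}--\eqref{cond2} by applying the Banach-space holomorphic implicit function theorem in the Banach algebra $\scra_r$ for $r$ sufficiently large.

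For the formal part, I would pick a linear functional $\lambda\colon\setR^k\to\setR$ strictly positive on $[0,\infty)^k\setminus\{0\}$, e.g., $\lambda(y)=|y|$. Discreteness of $X$ gives $\mu:=\inf\{\lambda(y):y\in X\setminus\{0\}\}>0$, so $\lambda(X)$ is well-ordered. Split $a_i=a_i(0)u+a_i^+$ and $g_j=g_j(0)u+g_j^+$ with $a_i^+(0)=g_j^+(0)=0$. Any convolution $(a^+)^{*\nu}*(g^+)^{*\kappa}$ of total degree $N:=|\nu|+|\kappa|$ vanishes on $\{x\in X:\lambda(x)<N\mu\}$, so substituting $\vec a$ and $\vec g$ into the Taylor series of $F$ at $(v_0,z_0)$ defines an element of $\scra^m$ whose value at each $x$ is a finite sum. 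Condition \eqref{cond1} makes $F[\vec a,\vec g](0)=F(v_0,z_0)=0$ automatic; for $x\neq 0$, separating the linear terms of the expansion yields
\[
F[\vec a,\vec g](x)\;=\;\partial_vF(v_0,z_0)\,\vec a(x)+\partial_zF(v_0,z_0)\,\vec g(x)+R_x,
\]
where $R_x$ depends only on the values of $\vec g$ at points with strictly smaller $\lambda$. Invertibility of $\partial_zF(v_0,z_0)$ then determines $\vec g(x)$ uniquely by induction on $\lambda(x)$ in the well-ordered set $\lambda(X)$.

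For the analytic part, assume $a_i\in\scra_{r_0}$. Since $\|a_i-a_i(0)u\|_r\to 0$ as $r\to\infty$, I would fix $r\geq r_0$ so that each $a_i$ lies as close as needed to $a_i(0)u$ in the $r$-norm. In the Banach algebra $\scra_r$ consider the substitution map
\[
\Phi(\vec\alpha,\vec\gamma)\;:=\;\sum_{\nu,\kappa}c_{\nu,\kappa}\,(\vec\alpha-v_0u)^{*\nu}*(\vec\gamma-z_0u)^{*\kappa},
\]
where the $c_{\nu,\kappa}\in\setC^m$ are the Taylor coefficients of $F$ at $(v_0,z_0)$. Submultiplicativity of $\|\cdot\|_r$ together with absolute convergence of $F$'s Taylor series on a polydisk around $(v_0,z_0)$ makes $\Phi$ a holomorphic $\scra_r^m$-valued map on an open ball $U\subseteq\scra_r^{n+m}$ about $(v_0u,z_0u)$. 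Its partial derivative with respect to $\vec\gamma$ at the base point is scalar multiplication by the invertible matrix $\partial_zF(v_0,z_0)\in\GL_m(\setC)$, which is invertible on $\scra_r^m$. The Banach holomorphic implicit function theorem then furnishes a unique local holomorphic solution $\vec\gamma=G(\vec\alpha)\in\scra_r^m$ with $G(\vec\alpha)(0)=z_0$; by the uniqueness established in the first stage, $G(\vec a)=(g_1,\ldots,g_m)$, so $g_j\in\scra_\infty$.

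Finally, point evaluation $\varepsilon_s\colon\scra_r\to\setC$, $g\mapsto\wt g(s)$, is a continuous unital algebra homomorphism for every $s\in\overline H_r^{\,k}$. Using that $\wt g(s)\to g(0)$ uniformly in $s\in\overline H_r^{\,k}$ as $r\to\infty$ for any $g\in\scra_\infty$, one enlarges $r$ so that $(\wt a_1(s),\ldots,\wt a_n(s),\wt g_1(s),\ldots,\wt g_m(s))$ remains inside the polydisk of Taylor convergence of $F$ around $(v_0,z_0)$ for all such $s$; this gives \eqref{condpre2}, and applying $\varepsilon_s$ coordinatewise to $\Phi(\vec a,\vec g)=0$ then yields \eqref{cond2}. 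Uniqueness in $\scra_\infty^m$ under \eqref{cond1}, \eqref{condpre2}, \eqref{cond2} follows from uniqueness in $\scra^m$, since \eqref{cond2} on a half-space forces \eqref{eq1.5} by injectivity of the Laplace transform on $\scra_\infty$. The principal obstacle is the quantitative setup of the Banach IFT: choosing $r$ large enough that $\Phi$ is simultaneously well-defined, holomorphic, and has invertible $\partial_{\vec\gamma}\Phi$ on a concrete neighborhood of $(\vec a,z_0u)$ in $\scra_r^{n+m}$.
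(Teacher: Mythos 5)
Your proposal is essentially correct but takes a genuinely different route from the paper at both stages, so let me compare.

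The paper's proof of the first assertion does not use a coefficient recursion: it invokes Biller's implicit function theorem for complete commutative continuous inverse algebras whose spectrum is a singleton (Lemma~\ref{biller}), after checking in Lemma~\ref{arithCIA} that $\scra=\setC^X$ with the product topology is such an algebra, with $\wh{\scra}=\{\xi\}$, $\xi(f)=f(0)$. Your well-ordering-by-$\lambda(x)$ recursion is a direct elementary argument in the spirit of the paper's proof of Theorem~\ref{T1}, extended from polynomials to Taylor expansions of $F$; it works, and it has the advantage of being self-contained. For the second assertion, the paper does \emph{not} run a Banach implicit function theorem inside $\scra_r$: it instead appeals to the general Theorem~\ref{Tnew} on Laplace transforms of measures, whose proof uses the spectral mapping theorem in the Banach algebras $A_r$ of measures together with the shrinking-spectrum Lemma~\ref{Lnew}. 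Your Banach-space holomorphic IFT, applied to the power-series substitution map $\Phi$ on $\scra_r^{n+m}$ after enlarging $r$ so that $\vec a$ lies in the polydisk of convergence (the enlargement being justified by $\|a_i-a_i(0)u\|_r\to 0$, the analogue of Lemma~\ref{Lnew}), is a valid alternative and avoids measure theory entirely. The trade-off is that the paper's detour through Theorem~\ref{Tnew} simultaneously handles non-discrete $X$, whereas yours is tailored to the discrete setting; conversely yours is more elementary.

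One genuine gap you should close: the theorem's condition \eqref{eq1.5} is phrased in terms of the multivariable holomorphic functional calculus $F[a_1,\ldots,a_n,g_1,\ldots,g_m]$ in the Fr\'echet algebra $\scra$, not in terms of the formal Taylor substitution you use in stage one. You need to verify that these two constructions agree — i.e., that the coordinate-wise-eventually-constant partial sums of the Taylor series of $F$ converge in $\scra$ to the element produced by Biller's functional calculus. Without this, your recursion proves existence and uniqueness of a solution to a \emph{different} equation, and the bridge to the statement (and to the conclusion ``by the uniqueness established in the first stage, $G(\vec a)=(g_1,\ldots,g_m)$'') is missing. The identification is true and not hard — it follows from naturality/uniqueness of the functional calculus and the fact that $\scra$ carries the product topology, so convergence is coordinate-wise — but it must be stated and proved, since the rest of your argument leans on it at both the uniqueness step and the step where you pass from $\Phi(\vec a,\vec g)=0$ in $\scra_r$ to $F[\vec a,\vec g]=0$ in $\scra$ via the continuous inclusion $\scra_r\hookrightarrow\scra$ (the paper's Lemma~\ref{lemmaclear1}). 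Similarly, the facts you use in the final paragraph — that $\wt g(s)\to g(0)$ uniformly on $\ol H_r^{\,k}$ as $r\to\infty$, and injectivity of $g\mapsto\wt g|_{\ol H_r^{\,k}}$ on $\scra_\infty$ — correspond to Lemma~\ref{propslapl}(c),(d) and should be proved or cited rather than asserted.
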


Here the left hand side of \eqref{eq1.5} is obtained via multivariable
holomorphic functional calculus (see \cite{Bil2004} for a recent exposition
in the required generality).

In Section \ref{S4} a proof of Theorem \ref{T3} is given based on a version 
of the implicit function theorem proved by Biller \cite{Bil2004} and 
techniques taken from the theory of commutative topological algebras.

The results concerning generalized Dirichlet series are special cases of
results concerning Laplace transforms (proved in Section \ref{S6}), which we
outline now. The necessary background material concerning convex cones and 
Laplace transforms (of positive measures) needed here can be found, e.g., in 
\cite{Nee2000} or \cite{Glo2003}.

\noindent
In the sequel, let $C\subseteq\setR^k$ be a non-empty closed \emph{convex 
cone}%
\footnotemark[1]. Assume that $C$ is \emph{pointed}%
\footnotemark[2] and \emph{generating}%
\footnotemark[3].
\footnotetext[1]{That is, $C$ is convex and $[0,\infty)\cdot C\subseteq C$.}%
\footnotetext[2]{That is, $C\cap (-C)=\{0\}$ or, equivalently, $C$ contains 
no lines \cite[Corollary~V.1.11]{Nee2000}.}%
\footnotetext[3]{That is, $C-C=\setR^k$ or, equivalently, $C$ has non-empty
interior $C^\circ$ (cf. \cite[Proposition 5.1.4\,(ii)]{Nee2000}).}%
Since $C$ is pointed, the dual cone 
$C^\star:=\{y\in\setR^k:xy\geq 0\text{ for all }x\in C\}$ has non-empty 
interior (see \cite[Proposition V.1.5\,(ii)]{Nee2000}). We pick a  
$y_0\in (C^\star)^\circ$. Then $xy_0>0$ for each $x\in C\setminus\{0\}$
(cf.~\cite[Proposition V.1.4\,(v)]{Nee2000}).

Let $X\subseteq C$ be an arbitrary (not necessarily discrete or countable) 
additive subsemigroup with $0\in X$, equipped with a Hausdorff topology which 
makes the inclusion map $X\to \setR^k$ continuous and turns $X$ into an 
additive topological semigroup (i.e., addition $X\times X\to X$ is 
continuous).             

Consider the complex vector space $M(X)$ of all complex (not necessarily 
bounded) Radon measures $\mu$ on $X$, defined on the $\delta$-ring 
$\delta(X)$ generated by the set of compact subsets of $X$ (see Section 
\ref{S5} for our measure-theoretic setting). For $\mu\in M(X)$, let 
$|\mu|\in M_+(X)$ denote the associated total variation measure. Given
$r\in\setR$, let $M_r(X)$ be the set of all $\mu\in M(X)$ such that
\[
\|\mu\|_r := \int_X e^{-rxy_0}\,d|\mu|(x)<\infty\,.
\]
To emphasize the dependence on $y_0$, we occasionally write $M_r^{(y_0)}(X)$ 
instead of $M_r(X)$ and $\|\mu\|_r^{(y_0)}$ instead of $\|\mu\|_r$. Then 
$M_r(X)$ is a vector subspace of $M(X)$ and the convolution of measures turns 
$M_r(X)$ into a complex algebra. In fact, $(M_r(X),\|\cdot\|_r)$ is a 
commutative Banach algebra with unit element $\delta_0$ (point mass at
$0\in X$). It is isomorphic to the Banach algebra $M_0(X)$ of bounded complex 
Radon measures on $X$ via $\mu\mapsto e^{-rxy_0}d\mu(x)$, where we write 
$f(x)\,d\mu(x)$ or $f\odot\mu$ for the measure of density $f$ with respect 
to~$\mu$.

From $M_r(X)\subseteq M_t(X)$ for $r\leq t$ we conclude that 
$M_\infty(X):=\bigcup_{r\in \setR}M_r(X)$ is an algebra under convolution and 
is independent of the choice of $y_0$. Given $\mu\in M_r(X)$, the set
\[
D_\mu:=\left\{s\in\setC^k:\int_X e^{-x\re{s}}\,d|\mu|(x)<\infty\right\}
\]
is convex (cf. \cite[Proposition~V.4.3]{Nee2000}), where $\re{s}$ stands for 
the real part of $s$. We define the \emph{Fourier-Laplace transform} 
$\wt{\mu}\colon D_\mu\to\setC$ of $\mu$ by
\[
\wt{\mu}(s):=\int_X e^{-x\cdot s}\,d\mu(x)\,.
\]
Then $ry_0+C^\star+i\setR^k\subseteq D_\mu$, entailing that the convex set
$D_\mu\subseteq\setC^k$ has non-empty interior. Given $\mu\in M_r(X)$, we 
write
\[
\wt{\mu}(\infty):=\mu(\{0\})\,.
\]
For $r\in\setR$ and a closed unital subalgebra $A\subseteq M_0(X)$, we set 
$A_r:=\{e^{rxy_0}d\mu(x)\colon\mu\in A\}$ and define 
$A_\infty:=\bigcup_{r\in \setR}A_r$. Typically, $A=M_0(X)$.

\begin{theorem} \label{Tnew}
Let $V\subseteq\setC^n$ and $Z\subseteq\setC^m$ be open subsets, the function
\begin{equation*}
F\colon V\times Z\to\setC^m,\quad (v,z)\mapsto F(v,z)
\end{equation*}
be holomorphic, and $(v_0,z_0)\in V\times Z$ with $F(v_0,z_0)=0$. Let 
$\lambda_1,\ldots, \lambda_n\in A_\infty$ be measures such that
$\big(\wt{\lambda}_1(\infty),\ldots,\wt{\lambda}_n(\infty)\big)=v_0$.
If the differential $\pa_z{F(v_0,z)}$ at $z=z_0$ is in $\GL_m(\setC)$, then 
there exists a unique $m$-tuple $\mu=(\mu_1,\ldots,\mu_m)\in A_\infty^m$ such 
that
\begin{equation}\label{cond0new}
\big(\wt{\mu}_1(\infty),\ldots,\wt{\mu}_m(\infty)\big)
\,=\,z_0
\end{equation}
and, for some $r\in\setR$,
\begin{equation}\label{cond1new}
 \big(\wt{\lambda}_1(s),\ldots,\wt{\lambda}_n(s),
 \wt{\mu}_1(s),\ldots,\wt{\mu}_m(s)\big)\in V\times Z
\end{equation}
holds for all $s\in ry_0 +C^\star+i\setR^k$ as well as
\begin{equation}\label{cond2new}
F\big(\wt{\lambda}_1(s),\ldots,\wt{\lambda}_n(s),
\wt{\mu}_1(s),\ldots,\wt{\mu}_m(s)\big)\,=\,0.
\end{equation}
Here $\mu$ is also determined by \eqref{cond0new} and the condition that
\begin{equation}\label{cond3new}
F[\lambda_1,\ldots,\lambda_n,\mu_1,\ldots,\mu_m]=0
\end{equation}
holds in the commutative Banach algebra $A_r$ for some $r\in\setR$ such
that $\lambda_1,\ldots,\lambda_n$, $\mu_1,\ldots,\mu_m\in A_r$.
\end{theorem}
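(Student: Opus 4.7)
The plan is to apply Biller's holomorphic implicit function theorem \cite{Bil2004} in the commutative Banach algebra $A_r$ for $r$ sufficiently large, paralleling the proof of Theorem \ref{T3}. I begin by fixing $r$ large enough that $\lambda_1,\ldots,\lambda_n\in A_r$ and identifying two families of characters of $A_r$: the evaluation at the origin $\chi_\infty\colon\mu\mapsto\mu(\{0\})=\wt\mu(\infty)$, which is multiplicative because pointedness of $C$ forces any decomposition $0=x'+x''$ in $X\subseteq C$ to satisfy $x'=x''=0$; and, for each $s\in ry_0+C^\star+i\setR^k$, the Laplace evaluation $\chi_s\colon\mu\mapsto\wt\mu(s)$, which is well defined since $|e^{-x\cdot s}|\le e^{-rxy_0}$ on $X$ and multiplicative by Fubini. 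A key auxiliary estimate is
\[
|\chi(\mu)-\mu(\{0\})|\,=\,|\chi(\mu-\mu(\{0\})\delta_0)|\,\le\,\int_{X\setminus\{0\}} e^{-rxy_0}\,d|\mu|(x)
\]
for any character $\chi$ of $A_r$ and any $\mu\in A_r$, whose right-hand side tends to $0$ as $r\to\infty$ (whenever $\mu\in A_{r_0}$ for some fixed $r_0$) by dominated convergence.

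Next, I apply Biller's IFT to $F$ at the base point $(a_0,b_0):=(v_0\delta_0,z_0\delta_0)\in A_r^n\times A_r^m$, where $F[a_0,b_0]=F(v_0,z_0)\delta_0=0$, the joint spectrum reduces to $\{(v_0,z_0)\}\subseteq V\times Z$, and $\partial_z F[a_0,b_0]=\partial_z F(v_0,z_0)\delta_0\in\GL_m(A_r)$. The estimate above shows that after enlarging $r$, the element $\lambda$ lies in the $A_r^n$-neighborhood of $a_0$ on which the IFT applies with the invertibility hypothesis intact at every character. This produces $\mu=(\mu_1,\ldots,\mu_m)\in A_r^m$ close to $b_0$ with $F[\lambda,\mu]=0$ in $A_r$, i.e., \eqref{cond3new}. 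Applying $\chi_\infty$ gives $F(v_0,\chi_\infty(\mu))=0$, and the isolation of $z_0$ as a zero of $F(v_0,\cdot)$ combined with the closeness of $\chi_\infty(\mu)$ to $z_0$ forces $\chi_\infty(\mu)=z_0$, which is \eqref{cond0new}. Then I apply $\chi_s$ and use compatibility of characters with the holomorphic functional calculus to conclude $F(\wt\lambda(s),\wt\mu(s))=0$ for $s\in ry_0+C^\star+i\setR^k$, which is \eqref{cond2new}. Possibly enlarging $r$ once more, the same estimate keeps $(\wt\lambda(s),\wt\mu(s))$ in $V\times Z$, giving \eqref{cond1new}.

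For uniqueness, I suppose $\mu'\in A_\infty^m$ satisfies \eqref{cond0new} and \eqref{cond3new} in some $A_{r'}$. Both $\mu$ and $\mu'$ then lie in $A_r$ for every $r\ge\max(r_0,r')$, and the estimate above shows both lie arbitrarily close to $z_0\delta_0$ in the $\|\cdot\|_r$-norm as $r\to\infty$. So for $r$ sufficiently large they fall inside the Biller-IFT uniqueness neighborhood and must coincide in $A_r$, hence as measures. The principal obstacle I foresee is verifying the invertibility hypothesis of Biller's IFT \emph{uniformly} over the whole character spectrum of $A_r$ rather than only at $\chi_\infty$: $A_r$ may carry many characters beyond the Laplace evaluations, so control of $\partial_z F$ across the full joint spectrum of $(\lambda,\mu)$ is exactly what the uniform smallness estimate supplies, by squeezing all characters of $A_r$ into a small neighborhood of $(v_0,z_0)$ where $\partial_z F$ remains invertible.
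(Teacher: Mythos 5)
Your approach differs from the paper's in a substantive way: you want to invoke the algebra-valued implicit function theorem of Biller directly in the Banach algebra $A_r$, whereas the paper never uses that theorem in the proof of Theorem~\ref{Tnew}. Instead the paper applies the \emph{scalar} implicit function theorem to $F$, obtains a holomorphic map $\phi\colon V_0\to Z_0$ whose graph is the local zero set, and then explicitly \emph{defines} $\mu_j:=\phi_j[\lambda_1,\ldots,\lambda_n]$ via the multivariable holomorphic functional calculus in $A_r$ (after using Lemma~\ref{Lnew} to push the joint spectrum of $\lambda$ inside $V_0$). Equation~\eqref{cond3new} then drops out of the Spectral Mapping Theorem, and \eqref{cond0new}--\eqref{cond2new} follow by applying the characters $h_s$. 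Your auxiliary estimate $|\chi(\mu)-\mu(\{0\})|\leq\int_{X\setminus\{0\}}e^{-rxy_0}\,d|\mu|$ plays the same role as Lemma~\ref{Lnew}, and your observations about $\chi_\infty$ and $\chi_s$ being characters are all correct. The existence argument is plausible, although you should be aware that Lemma~\ref{biller} as stated in the paper requires a singleton spectrum and hence does not apply to $A_r$; you would need the full, non-singleton version of Biller's Theorem~8.2, whose hypotheses (joint spectrum contained in a domain where $\partial_z F$ is invertible) are exactly what your ``squeezing'' estimate supplies.

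The genuine gap is in uniqueness. You only establish uniqueness of $\mu$ under conditions \eqref{cond0new} and \eqref{cond3new}, i.e.\ you verify the theorem's \emph{last} sentence. But the main uniqueness claim is under \eqref{cond0new}, \eqref{cond1new}, \eqref{cond2new} alone, and \eqref{cond2new} only asserts that the \emph{Laplace transform} of $F[\lambda,\mu']$ vanishes on a half-space, not that $F[\lambda,\mu']=0$ in $A_r$. To bridge that, one must: (i) enlarge $r$ until the joint spectrum of $(\lambda,\mu')$ lies in $V_0\times Z_0$ (so that $F[\lambda,\mu']$ is even defined), using \eqref{cond0new} together with the shrinking-spectrum estimate; (ii) use the compatibility of functional calculus with the characters $h_s$ to deduce that $\widetilde{F[\lambda,\mu']}(s)=0$ on $ry_0+C^\star+i\setR^k$; and (iii) invoke the injectivity of the Laplace transform (Lemma~\ref{propslapl}\,(d)) to conclude $F[\lambda,\mu']=0$, or argue directly as the paper does, using the graph property of $\phi$ on $V_0\times Z_0$ to deduce $\widetilde{\mu}'_j(s)=\phi_j(\widetilde{\lambda}(s))=\widetilde{\mu}_j(s)$ and then applying Lemma~\ref{propslapl}\,(d). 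Without some such appeal to injectivity of the Laplace transform, a measure $\mu'$ could in principle satisfy \eqref{cond1new}--\eqref{cond2new} without satisfying \eqref{cond3new}, and your uniqueness argument would not catch it.
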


The preceding theorem subsumes the following special cases.

\begin{example} \label{E1}
As before, let $C\subseteq\setR^k$ be a pointed and generating closed convex 
cone. Let $X\subseteq C$ be an arbitrary subsemigroup such that $0\in X$, and 
endow $X$ with the discrete topology. For fixed $y_0\in (C^\star)^\circ$ and 
$r\in\setR$, define the weight $w\colon X\to\setR_+$ by $w(x)=e^{-rxy_0}$ and 
the $r$-norm $\|f\|_r$ of $f\colon X\to\setC$ by
\begin{equation*}
\|f\|_r:=\sum_{x\in X}\,w(x)\,|f(x)|\,,
\end{equation*}
where the right hand side denotes the supremum in $[0,\infty]$ of the finite 
partial sums. Then $\ell_w^1(X):=\{f\in\setC^X:\|f\|_r<\infty\}$ is the 
complex vector space of functions of finite norm, and Theorem \ref{Tnew} 
applies to $X$. Note that the map                
\begin{equation*}
M(X)\to\setC^X\,,\quad \mu\mapsto f_\mu
\end{equation*}
with $f_\mu(x):=\mu(\{x\})$ is an isomorphism of vector spaces which induces
an isomorphism $M_r(X)\cong\ell^1_w(X)$. Given $\mu$ and $f=f_\mu$ as
before, we have
\begin{equation*}
\|\mu\|_r=\sum_{x\in X}\,|f(x)|\,e^{-rxy_0}=\|f\|_r\,.
\end{equation*}
If $\mu\in M_r(X)$, then $\|\mu\|_r<\infty$ and hence $f(x)=0$ for all but
countably many $x$ (cf. \cite[4.15]{Rud1987}). Then
\begin{equation}\label{corresp}
\wt{\mu}(s)=\wt{f}(s):=\sum_{x\in X}\,f(x)\,e^{-sx}
\end{equation}
for all $s\in D_\mu$, where the right hand side is interpreted as the limit
of an absolutely summable family of vectors
(cf. \cite[Chapter V, \S3]{Die1960}).
\end{example}

\begin{example} \label{E2}
If $C=[0,\infty)^k$ in Example \ref{E1}, then $C^\star=[0,\infty)^k$ with 
$y_0:=(1,1,\ldots,1)$ in its interior and $ry_0+C^\star+i\setR^k=\ol{H}_r^k$ 
for each $r\in\setR$. Equipping an arbitrary subsemigroup 
$X\subseteq [0,\infty)^k$ containing $0$ with the discrete topology, and 
taking $A:=M_0(X)$, Theorem \ref{Tnew} provides a generalization of the 
second half of Theorem \ref{T3}, which does not require discreteness of $X$ 
(nor countability).
\end{example}

\begin{example} \label{E3}
Theorem \ref{Tnew} also applies to an arbitrary additive semigroup 
$X\subseteq C$ endowed with the topology induced by $\setR^k$. For example, 
we can let $X:=C$ be a pointed and generating closed convex cone in 
$\setR^k$, with the induced topology.
\end{example}

\begin{example} \label{E4}
If $X\subseteq C$ is a Borel measurable subset of $\setR^k$ (e.g., if $X=C$),
we can equip $X$ with the topology induced by $\setR^k$ and choose
$A:=\setC\delta_0+L^1(X,\lambda)\odot\lambda$, where $\lambda$ denotes the
restriction of Lebesgue-Borel measure on $\setR^k$ to $\delta(X)$.
\end{example}

\section{Proof of Theorem \ref{T1}} \label{S2}

For the \emph{proof of Theorem} \ref{T1}, we arrange the elements $x$ of $X$ 
according to their size $|x|$ and, in case of equal size, in lexicographic 
order of the components. In this way, we obtain a total order $\preceq$ on 
$X$. Since~$X$ is discrete, the number of elements of $X$ having the same 
size is finite. Furthermore, it easily follows that $(X,\preceq)$ is order 
isomorphic to $(\setN,\leq)$, enabling us to argue by induction on $x\in X$.

Rewrite $Tg=0$ as infinite system of equations
\begin{equation} \label{syst}
 \sum_{\substack{y,x'\in X \\ y+x'=x}}\,
 \big(a_d(y)\,g^{*d}(x')+a_{d-1}(y)\,g^{*(d-1)}(x')+\cdots+
 a_0(y)\,u(x')\big)=0
\end{equation}
for $x\in X$. It follows from \eqref{syst} at $x=0$ that necessarily
$f\big(g(0)\big)=0$. Each simple zero $z_0$ of $f(z)$ serves for starting the
following recurrence relation with $g(0)=z_0$.

Now let $0\neq x\in X$. By separating all terms containing $g(x)$ in
\eqref{syst}, we see that the coefficient of $g(x)$ equals the value of the
derivative $f'(z)$ at $z=z_0$. Therefore \eqref{syst} takes the form
\begin{equation} \label{rec}
 f'\big(g(0)\big)\cdot g(x)=
 -\sum_{0\leq j\leq d}~
 \sum_{\substack{y,x_1,\ldots,x_j\in X \\
 y+x_1+\cdots+x_j=x \\ x_1,\ldots,x_j\neq x}}
 a_j(y)\,g(x_1)\cdots g(x_j)
\end{equation}
for $0\neq x\in X$. Due to the choice of $g(0)=z_0$ we have
$f'\big(g(0)\big)\neq 0$. Note that $|x_1|,\ldots,|x_j|<|x|$ in all summands
on the right-hand side in \eqref{rec} so that \eqref{rec} represents a
recursion formula, which uniquely determines an arithmetic function
$g\in\scra$.

To prove the second part of Theorem \ref{T1}, let $q\neq 0$ be an element of 
$X$ of minimal size $|q|>0$. Then there are only two additive decompositions 
of $q=x+x'$ into two summands, namely $q+0$ and $0+q$. Let $a\in\scra$ 
satisfy $a(0)=0\neq a(q)$ and suppose that $Tg=g*g-a$ has a zero $g\in\scra$. 
Then $f(z)=z^2$ vanishes at the double zero $z=0$ only. It follows from
$Tg(0)=0$ that $g(0)=0$ and hence
$Tg(q)=2\,g(0)\,g(q)-a(q)=-a(q)\neq 0$, a contradiction. Therefore $Tg=0$ is 
unsolvable.

Since $\scra$ is an integral domain, the polynomial $Tg$ of degree $d$ has at 
most $d$ zeros $g\in\scra$.
\qed

As an immediate consequence we note

\begin{corollary} \label{C1}
If $f(z)$ satisfies $\deg{f}=d$ and all zeros of $f(z)$ are simple, then,
with the $d$ distinct solutions $g_1,\ldots,g_d\in\scra$ to $Tg=0$, we have
\begin{equation*}
Tg=a_d*(g-g_1)*\cdots*(g-g_d).
\end{equation*}
\end{corollary}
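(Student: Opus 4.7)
The plan is to lift the convolution polynomial to an honest polynomial in the polynomial ring $\scra[Y]$ over $\scra$ and apply the factor theorem iteratively. Concretely, set
\[
P(Y) \,:=\, a_d\,Y^d + a_{d-1}\,Y^{d-1} + \cdots + a_1\,Y + a_0 \;\in\; \scra[Y],
\]
so that $Tg = P(g)$ once powers of $Y$ are read as convolution powers. By Theorem \ref{T1}, for each of the $d$ simple zeros $z_1,\ldots,z_d$ of $f$ there is a unique $g_i \in \scra$ with $Tg_i = 0$ and $g_i(0) = z_i$, and these $g_i$ are pairwise distinct in $\scra$ because the $z_i$ are distinct in $\setC$.

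The first step is the factor theorem in the commutative polynomial ring $\scra[Y]$: for any $b \in \scra$ there is a unique $Q \in \scra[Y]$ of degree $d-1$ with leading coefficient $a_d$ such that $P(Y) = (Y - b)\,Q(Y) + P(b)$. Applied to $b = g_1$, this gives $P(Y) = (Y - g_1)\,Q_1(Y)$ because $P(g_1) = Tg_1 = 0$. To continue, I substitute $Y = g_2$: from $0 = P(g_2) = (g_2 - g_1)\,Q_1(g_2)$, the integral-domain property of $\scra$ (already invoked at the end of the proof of Theorem \ref{T1}) together with $g_2 - g_1 \neq 0$ forces $Q_1(g_2) = 0$, and a second application of the factor theorem peels off $(Y - g_2)$. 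Iterating through $i = 3,\ldots,d$ with the same argument, and using $g_i - g_j \neq 0$ for $j < i$, yields
\[
P(Y) \,=\, (Y - g_1)(Y - g_2)\cdots(Y - g_d)\,R
\]
for some $R \in \scra[Y]$ of degree $0$, i.e., $R \in \scra$.

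Finally, matching leading coefficients on both sides forces $R = a_d$, and evaluating at $Y = g$ gives the claimed identity $Tg = a_d*(g-g_1)*\cdots*(g-g_d)$. The only non-routine ingredient is the integral-domain property of $\scra$, the same one already used at the end of the proof of Theorem \ref{T1} (and readily verified from the fact that the size-plus-lex order $\preceq$ on $X$ is strictly monotone under addition, so that $(g*h)(x_g + x_h) = g(x_g)\,h(x_h) \neq 0$ whenever $g, h \neq 0$, where $x_g$ and $x_h$ denote the $\preceq$-minima of the respective supports). Beyond this, everything reduces to elementary polynomial algebra over a commutative ring with unit, and I expect no real obstacle beyond checking the zero-divisor-free substitution step at each stage of the iteration.
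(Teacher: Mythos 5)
Your proof is correct. The paper presents Corollary~\ref{C1} as an ``immediate consequence'' of Theorem~\ref{T1} and gives no argument, so there is no official proof to compare against; your factor-theorem argument in $\scra[Y]$ (peeling off monic factors $Y-g_i$ one at a time, using the integral-domain property at each substitution step, and matching leading coefficients at the end) is exactly the natural way to fill in the details, and your verification that $\scra$ is an integral domain via the strict monotonicity of the size-plus-lex order under addition is the same fact the paper relies on silently at the end of the proof of Theorem~\ref{T1}.
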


%%%%%%%%%%%%%%%%%%%%%%%%%%%%%%%%%%%%%%%%%%%%%%%%%%%%%%%%%%%%%%%%%%%%%%%%%%%%%

\section{Elementary proof of Theorem \ref{T2}} \label{S3}

%%%%%%%%%%%%%%%%%%%%%%%%%%%%%%%%%%%%%%%%%%%%%%%%%%%%%%%%%%%%%%%%%%%%%%%%%%%%%

The Banach algebras $\scra_\vr$ and $\scra:=\scra_0$ are isomorphic under the
map $a(x)\mapsto e^{-\vr|x|}a(x)$ for $x\in X$, $a\in\scra_\vr$\,. For the
proof of Theorem \ref{T2} we may therefore assume that
$\|a_j\|:=\|a_j\|_0<\infty$. It suffices to show that the solution
$g\in\scra$ to $Tg=0$ with $f(z_0)=0$ and $f'(z_0)\neq 0$ at $z_0=g(0)$
belongs to $\scra_r$ for some $r\geq 0$. With $g_r(x):=e^{-r|x|}g(x)$ we
rewrite \eqref{rec} as
\begin{align*}
f'(z_0)\cdot g_r(x)
 & = - \sum_{0\leq j\leq d}~
     \sum_{\substack{y,x_1,\ldots,x_j\in X \\
     y+x_1+\cdots+x_j=x \\ x_1,\ldots,x_j\neq x}}
     \frac{a_j(y)}{e^{r|y|}}\,g_r(x_1)\cdots g_r(x_j) \\
 & = - \sum_{0\leq j\leq d}\,a_j(0)
     \sum_{\substack{x_1,\ldots,x_j\in X \\
     x_1+\cdots+x_j=x \\ x_1,\ldots,x_j\neq x}}
     g_r(x_1)\cdots g_r(x_j) \\
 & \quad - \sum_{0\leq j\leq d}~
     \sum_{\substack{y,x_1,\ldots,x_j\in X \\ y+x_1+\cdots+x_j=x \\
     y\neq 0,x_1,\ldots,x_j\neq x}}
     e^{-r|y|}\,a_j(y)\,g_r(x_1)\cdots g_r(x_j)
\end{align*}
for $0\neq x\in X$.

We have to prove that there is an $r\geq 0$ such that the partial sums
\begin{equation*}
S_r(m):=\sum_{0<|x|\leq m}\,|g_r(x)|
\end{equation*}
are uniformly bounded for numbers $m$ belonging to the discrete image set
$M=\{m_0,m_1,\ldots\}$ of $X$ under $|\,\,\,|$ with  $m_0=0<m_1<\ldots$, say.
Clearly $S_r(m_0)=0$, and for $n\in\setN$ the above representation of
$f'(z_0)\cdot g_r(x)$ yields

\begin{align*}
|f'(z_0)| & \cdot S_r(m_n)
     \leq \sum_{0\leq j\leq d}\,|a_j(0)|
          \sum_{0<|x|\leq m_n}
          \sum_{\substack{x_1,\ldots,x_j\in X \\
          x_1+\cdots+x_j=x \\ x_1,\ldots,x_j\neq x}}
          |g_r(x_1)|\cdots |g_r(x_j)| \\
 & + \sum_{0\leq j\leq d}~
     \sum_{0<|x|\leq m_n}
     \sum_{\substack{y,x_1,\ldots,x_j\in X \\
     y+x_1+\cdots+x_j=x \\ y\neq 0,x_1,\ldots,x_j\neq x}}
     e^{-r|y|}\,|a_j(y)|\,|g_r(x_1)|\cdots |g_r(x_j)|.
\end{align*}
Let $\Sigma_1$ and $\Sigma_2$ denote the multiple sums on the right-hand
side of this inequality. By extracting all powers of $|g_r(0)|=|g(0)|=|z_0|$
from the inner $j$-fold sum of $\Sigma_1$, we first obtain
\begin{align*}
\Sigma_1
 & \leq \sum_{j=2}^d\,|a_j(0)|\,
        \sum_{i=2}^j\,\binom{j}{i}\,|z_0|^{j-i}
        \sum_{0<|x|\leq m_n}
        \sum_{\substack{x'_1,\ldots,x'_i\in X \\
        x'_1+\cdots+x'_i=x \\ x'_1,\ldots,x'_i\neq 0,x}}
        |g_r(x'_1)|\cdots |g_r(x'_i)| \\
 & \leq \sum_{j=2}^d\,|a_j(0)|~
        \sum_{i=2}^j\,\binom{j}{i}\,|z_0|^{j-i}\,
        S_r^{\,i}(m_{n-1}).
\end{align*}
Next note that $e^{-r|y|}\leq e^{-rm_1}$ for all $y\in X$, $y\neq 0$. Then 
similarly
\begin{align*}
\Sigma_2
 & \leq e^{-rm_1}\,\sum_{0\leq j\leq d}~
        \sum_{0<|x|\leq m_n}~\sum_{\substack{y,x_1,\ldots,x_j\in X \\
        y+x_1+\cdots+x_j=x \\ y\neq 0,x_1,\ldots,x_j\neq x}}
        |a_j(y)|\,|g_r(x_1)|\cdots |g_r(x_j)| \\
 & \leq e^{-rm_1}\,\Bigg(\|a_0\|+\sum_{1\leq j\leq d}\,\|a_j\|
        \sum_{\substack{x_1,\ldots,x_j\in X \\
        |x_1+\cdots+x_j|<m_n}}
        |g_r(x_1)|\cdots |g_r(x_j)|\Bigg) \\
 & \leq e^{-rm_1}\,\sum_{j=0}^d\,\|a_j\|\,\big(|z_0|+S_r(m_{n-1})\big)^j.
\end{align*}
Introduce polynomials $P(t),Q(t)\in\setR[t]$ by
\begin{align*}
P(t) & := \frac{1}{|f'(z_0)|}\,\sum_{j=2}^d\,|a_j(0)|
          \sum_{i=2}^j\,\binom{j}{i}\,|z_0|^{j-i}\,t^i, \\ 
Q(t) & := \frac{1}{|f'(z_0)|}\,\sum_{j=0}^d\,\|a_j\|\,t^j 
\end{align*}
and summarize:
\begin{align}
S_r(m_n)    \label{eq3.3}
 & \leq P\big(S_r(m_{n-1})\big)+e^{-r m_1}\,Q\big(|z_0|+S_r(m_{n-1})\big)
   \qquad(n\in\setN).
\end{align}
This is a recursive estimate starting with $S_r(m_0)=0$. It remains to show
that there exist constants $r\geq 0$ and $C>0$ such that $S_r(m_n)\leq C$ for
all $n\in\setN$.

\noindent
Since $P(t)\geq 0$ and $Q(t)>0$ are increasing functions of $t\in [0,\infty)$ 
with $\deg{P(t)}\leq\deg{Q(t)}=d$ or $P(t)=0$ (null function), it suffices by 
\eqref{eq3.3} to find solutions $r\geq 0$ and $t>0$ of the inequality
\begin{equation*}
P(t)+e^{-r m_1}\,Q(|z_0|+t)\leq t
\end{equation*}
or, equivalently, of
\begin{equation} \label{eq3.4}
e^{-r m_1}\leq\frac{t-P(t)}{Q(|z_0|+t)}=\,:R(t)\,.
\end{equation}
Note that $P(0)=P'(0)=0$ and $R$ is bounded above on $[0,\infty)$. Hence
there exists some $t>0$ with $R(t)>0$. Choosing $r_0>0$ such that
$e^{-r_0 m_1}\leq R(t)$ we obtain \eqref{eq3.4}. Hence $S_{r_0}(m_n)\leq t$
for all $n\in\setN$, which completes the proof. \qed

\begin{remark} \label{R1}
The proof of Theorem \ref{T2} also leads to the quantitative estimates
$\|g\|_r\leq C$ and $r=\vr+m_1^{-1}\max{\big\{0,-\log{C}\big\}}$ with
$C=\sup{\big\{R(t):t\geq 0\big\}}$.
\end{remark}

For $X=\setN_0^k$, Theorem \ref{T2} applies to multidimensional power series
\begin{equation*}
\wt{g}(w)=\sum_{n\in\setN_0^k}\,g(n)\,w^n
\end{equation*}
with coefficient sequences $g\in\scra(X)$ and $w^n=e^{-n\cdot s}$ for
$n\in X$, $s\in\setC^k$. We recover a special case of the implicit function
theorem for complex analytic maps:

\begin{corollary} \label{C2}
Let the power series $\wt{a}_0(w),\ldots,\wt{a}_{d-1}(w)$ and
$\wt{a}_d(w)\neq 0$ be holomorphic functions of $w\in\setC^k$ in a
neighborhood of the origin. Suppose that $z_0\in\setC$ is a simple zero of
the polynomial
\begin{equation*}
a_d(0)\,z^d+a_{d-1}(0)\,z^{d-1}+\cdots+a_0(0)\in\setC[z].
\end{equation*}
Then there exists a local solution $\wt{g}(w)$ with $\wt{g}(0)=z_0$ to
\begin{equation*}
 \wt{a}_d(w)\,\wt{g}^d(w)+\wt{a}_{d-1}(w)\,\wt{g}^{d-1}(w)+\cdots+
 \wt{a}_0(w)=0
\end{equation*}
that is again holomorphic in a neighborhood of the origin.
\end{corollary}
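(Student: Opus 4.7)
The plan is to deduce the corollary directly from Theorem \ref{T2} via the standard identification of power series in $w\in\setC^k$ with Dirichlet series on $X=\setN_0^k$, obtained by the substitution $w_i=e^{-s_i}$ so that $w^n=e^{-n\cdot s}$. Under this identification, the algebra of power series with a common polydisc of absolute convergence about the origin corresponds precisely to the algebra $\scra_\infty=\bigcup_r\scra_r$, because holomorphy of $\wt{a}(w)$ on $\{|w_i|<\rho\}$ is equivalent, by absolute convergence of the Taylor series on compact subpolydiscs, to $\sum_{n}|a(n)|\delta^{|n|}<\infty$ for each $0<\delta<\rho$, which is the finiteness of $\|a\|_r$ for $r>-\log\delta$.

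First, I would translate the hypotheses. Each $\wt{a}_j(w)$ being holomorphic near $0$ yields a common polydisc of absolute convergence, so after choosing $r$ sufficiently large, all of $a_0,\ldots,a_d$ lie in the Banach algebra $\scra_r\subseteq\scra_\infty$. The polynomial $f(z)$ in \eqref{initpol} with its simple zero $z_0$ then gives exactly the data of Theorem \ref{T2}. That theorem supplies a (unique) $g\in\scra_\infty$ with $g(0)=z_0$ and $Tg=0$; after enlarging $r$ if necessary I arrange that $a_0,\ldots,a_d,g\in\scra_r$ simultaneously.

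Next, I would translate back. From $g\in\scra_r$ the power series $\wt{g}(w)=\sum_n g(n)\,w^n$ converges absolutely on the polydisc $\{|w_i|<e^{-r}\}$, so $\wt{g}$ is holomorphic on a neighborhood of $0$, and clearly $\wt{g}(0)=g(0)=z_0$. The algebra isomorphism between $\scra$ and the algebra of formal Dirichlet series, recorded right after \eqref{eq1.1}, restricts to an algebra isomorphism between $\scra_r$ and the algebra of those Dirichlet (equivalently, power) series whose coefficient sequences satisfy $\|\cdot\|_r<\infty$. Applied to $Tg=0$, this converts the convolution identity into the pointwise identity
\[
\wt{a}_d(w)\,\wt{g}(w)^d+\wt{a}_{d-1}(w)\,\wt{g}(w)^{d-1}+\cdots+\wt{a}_0(w)\;=\;\widetilde{Tg}(w)\;=\;0
\]
on the common polydisc of absolute convergence, proving the corollary.

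There is no real obstacle here beyond bookkeeping: the whole analytic content is already in Theorem \ref{T2}. The only thing to be careful about is the uniform choice of $r$ so that the coefficient functions and the solution live in a single Banach algebra $\scra_r$ before one invokes the algebra isomorphism to pass back to a holomorphic functional identity in $w$.
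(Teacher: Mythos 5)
Your proof is correct and follows precisely the route the paper intends: the paper offers no separate argument for Corollary~\ref{C2}, but merely observes (in the sentence preceding it) that it is Theorem~\ref{T2} specialized to $X=\setN_0^k$ via $w^n=e^{-n\cdot s}$. You have spelled out the two translation steps—that holomorphy of the power series near $0$ corresponds to membership of the coefficient sequence in $\scra_\infty$ (since $\|a\|_r=\sum_n|a(n)|e^{-r|n|}$ with $e^{-r}$ playing the role of the polyradius), and that the algebra isomorphism $\scra_r\to\{\text{power series absolutely convergent on } |w_i|\le e^{-r}\}$ turns $Tg=0$ into the pointwise polynomial identity—and these are exactly the bookkeeping the paper leaves implicit.
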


For $X=(\log{\setN})^k$, Theorem \ref{T2} also applies to multidimensional
ordinary Dirichlet series
\begin{equation*}
\wt{g}(s)=\sum_{n\in\setN^k}\,g(n)\,n^{-s}
\end{equation*}
with coefficient sequences $g\in\scra(\setN^k)$ and
$n^{-s}=e^{-(s_1\log{n_1}+\cdots+s_k\log{n_k})}$ for $n\in\setN^k$ and
$s\in\setC^k$ (cf. \cite{GLP2006}, Theorem 3). With
$1:=(1,\ldots,1)\in\setN^k$ we obtain

\begin{corollary} \label{C3}
Let the $k$-dimensional Dirichlet series $\wt{a}_0(s),\ldots,\wt{a}_{d-1}(s)$
and $\wt{a}_d(s)\neq 0$ converge absolutely for all $s$ in some $H_r^k$.
Suppose that $z_0\in\setC$ is a simple zero of the polynomial
\begin{equation*}
a_d(1)\,z^d+a_{d-1}(1)\,z^{d-1}+\cdots+a_0(1)\in\setC[z].
\end{equation*}
Then there exists a Dirichlet series $\wt{g}(s)$ with $g(1)=z_0$ that solves
\begin{equation*}
 \wt{a}_d(s)\,\wt{g}^d(s)+\wt{a}_{d-1}(s)\,\wt{g}^{d-1}(s)+
 \cdots+\wt{a}_0(s)=0
\end{equation*}
and also converges absolutely for all $s$ in some $H_\vr^k$. 
\end{corollary}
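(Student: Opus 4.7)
The plan is to deduce Corollary \ref{C3} as a direct transport of Theorem \ref{T2} along the bijection $\setN^k \to X := (\log\setN)^k \subseteq [0,\infty)^k$ given by $n = (n_1,\ldots,n_k) \mapsto x := (\log n_1,\ldots,\log n_k)$. Under this bijection, componentwise multiplication on $\setN^k$ corresponds to addition on $X$, so the componentwise Dirichlet convolution of sequences on $\setN^k$ is exactly the $*$-convolution on $\scra(X)$; the multiplicative unit $1 = (1,\ldots,1) \in \setN^k$ corresponds to $0 \in X$, and $n^{-s} = e^{-x\cdot s}$, so $\wt{a}_j(s) = \sum_{x\in X} a_j(x) e^{-x\cdot s}$ in the notation of \eqref{eq1.1}. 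In particular, $a_j(1) = a_j(0)$, so the polynomial $f(z)$ of \eqref{initpol} is exactly the polynomial in the statement of the corollary, and its simple zero $z_0$ serves as the initial condition $g(0) = z_0$.

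First I would verify the hypotheses of Theorem \ref{T2}. That $X$ is a discrete additive subsemigroup of $[0,\infty)^k$ containing $0$ is immediate, since the preimage in $X$ of any bounded subset of $\setR^k$ corresponds to a finite subset of $\setN^k$. Next, I would translate the absolute-convergence assumption into membership in $\scra_\infty$: pick any $\varrho > r$ and evaluate at $s = (\varrho,\ldots,\varrho) \in H_r^k$, so that
\[
\|a_j\|_\varrho \,=\, \sum_{x\in X}|a_j(x)|\,e^{-\varrho|x|} \,=\, \sum_{n\in\setN^k}|a_j(n)|\,(n_1\cdots n_k)^{-\varrho} \,<\, \infty,
\]
whence $a_0,\ldots,a_d \in \scra_\varrho \subseteq \scra_\infty$. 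Theorem \ref{T2} then produces $g \in \scra_\infty$ with $g(0) = z_0$ and $Tg = 0$.

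Finally, I would translate back. By choice of $\scra_\infty$, there exists $\varrho' \geq \varrho$ with $g, a_0, \ldots, a_d \in \scra_{\varrho'}$, so each of $\wt{g}(s), \wt{a}_0(s), \ldots, \wt{a}_d(s)$ converges absolutely on $\ol{H}_{\varrho'}^{\,k}$; in particular $\wt{g}$ converges absolutely on $H_{\varrho'}^k$, giving the analytic conclusion. The algebra isomorphism $\scra_{\varrho'} \to \{\wt{h} : h \in \scra_{\varrho'}\}$ (mentioned after \eqref{eq1.1}) carries the identity $Tg = 0$ in $\scra_{\varrho'}$ to the identity $\sum_{j=0}^d \wt{a}_j(s)\,\wt{g}(s)^j = 0$ for all $s \in H_{\varrho'}^k$, which is the desired conclusion.

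The only nontrivial step is the bookkeeping in the second paragraph, namely the recognition that absolute convergence of the ordinary $k$-dimensional Dirichlet series on $H_r^k$ is exactly the $r$-norm finiteness condition built into $\scra_\infty$; once this identification is made, Theorem \ref{T2} does all the substantive work.
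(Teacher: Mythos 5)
Your proposal is correct and follows exactly the route the paper intends: Corollary \ref{C3} is stated immediately after the remark that Theorem \ref{T2} applies to $X=(\log\setN)^k$, and your argument just spells out the bookkeeping the paper leaves implicit (the bijection $\setN^k\to(\log\setN)^k$, the identification $a_j(1)=a_j(0)$, discreteness of $X$, the observation that absolute convergence at $s=(\varrho,\ldots,\varrho)$ is precisely $\|a_j\|_\varrho<\infty$ because $e^{-\varrho|x|}=(n_1\cdots n_k)^{-\varrho}$, and the transfer back via the algebra isomorphism $g\mapsto\wt g$ on the region of absolute convergence).
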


%%%%%%%%%%%%%%%%%%%%%%%%%%%%%%%%%%%%%%%%%%%%%%%%%%%%%%%%%%%%%%%%%%%%%%%%%%%%%

\section{Proof of Theorem \ref{T3}} \label{S4}

%%%%%%%%%%%%%%%%%%%%%%%%%%%%%%%%%%%%%%%%%%%%%%%%%%%%%%%%%%%%%%%%%%%%%%%%%%%%%

Our proof of Theorem~\ref{T3} involves general facts concerning analytic 
equations in topological algebras. Recall that a complex \emph{topological 
algebra} is an algebra~$A$ over~$\setC$, equipped with a locally convex 
vector topology making the bilinear algebra multiplication $A\times A\to A$ a 
continuous map. It is called \emph{complete} if the underlying locally convex 
space is complete. A \emph{continuous inverse algebra} is a unital,
associative complex topological algebra~$A$ whose group of units $A^*$ is 
open in~$A$ and whose inversion map $A^*\to A$, $a\mapsto a^{-1}$ is
continuous (see \cite{Bil2004}, \cite{Glo2002} and \cite{Wae1954}). The 
\emph{spectrum} of a commutative continuous inverse algebra $A$ is the set 
$\wh{A}$ of all unital algebra homomorphisms $\xi\colon A\to\setC$. It is 
known that $\xi\mapsto\ker \xi$ is a bijection from $\wh{A}$ onto the set of 
all maximal (proper) ideals of~$A$ (cf. \cite[Lemma~2.5]{Bil2004}). The 
\emph{spectrum} of an element $a\in A$ is defined as 
$\sigma(a):=\{s\in \setC\colon s-a\not\in A^*\}$, and by 
\cite[Theorem~2.7\,(a)]{Bil2004}, it coincides with the set
$\{\xi(a)\colon\xi\in\wh{A}\,\}$. The \emph{joint spectrum} of elements 
$a_1,\ldots,a_n\in A$ is defined as
\[
\sigma(a_1,\ldots,a_n):=
\{(\xi(a_1),\ldots,\xi(a_n))\colon \xi\in \wh{A}\}\subseteq\setC^n.
\]
Then 
$\sigma(a_1,\ldots,a_n)\subseteq\sigma(a_1)\times\cdots\times\sigma(a_n)$.
If $A$ is a commutative, complete continuous inverse algebra,
$a_1,\ldots,a_n\in A$ and $f\colon U\to\setC$ a holomorphic function on an 
open subset $U\subseteq \setC^n$ such that 
$\sigma(a_1,\ldots,a_n)\subseteq U$, then the holomorphic functional
calculus gives rise to an element $f[a_1,\ldots,a_n]\in A$ 
(see \cite[\S4]{Bil2004} for details).

\begin{remark}\label{remfuncalc}
The following simple facts are essential for our purposes:
\begin{itemize}
\item[(a)]
Naturality of the holomorphic functional calculus. If $A$, $f$ and 
$a_1,\ldots,a_n\in A$ are as before and $\phi\colon A\to B$ is a continuous 
homomorphism of unital algebras to a complete, commutative continuous inverse 
algebra $B$, then 
$\sigma(\phi(a_1),\ldots,\phi(a_n))\subseteq \sigma(a_1,\ldots, a_n)$ and
\[
\phi(f[a_1,\ldots,a_n])=f[\phi(a_1),\ldots,\phi(a_n)]
\]
(see \cite[Theorem~4.9]{Bil2004}).
\item[(b)]
If $A=\setC$, then $f[a_1,\ldots,a_n]=f(a_1,\ldots,a_n)$ is the value of $f$ 
at $(a_1,\ldots,a_n)\in U\subseteq \setC^n$.
\end{itemize}
\end{remark}

Our proof of Theorem \ref{T3} uses the following special case of Biller 
\cite[Theorem~8.2]{Bil2004}, applied to algebras whose spectrum is a 
singleton.

\begin{lemma} \label{biller}
Let $A$ be a complete, commutative continuous inverse algebra whose spectrum
is a singleton, $\wh{A}=\{\xi\}$. Let $V\subseteq\setC^n$ and
$Z\subseteq\setC^m$ be open sets and let $F\colon V\times Z\to\setC^m$ be a
holomorphic function. Suppose that $v_0\in V$, $z_0\in Z$ such that
$F(v_0,z_0)=0$ and $\pa_z{F(v_0,z)|_{z=z_0}}\in\GL_m(\setC)$. Then, for each
$(a_1,\ldots, a_n)\in A^n$ satisfying
$\big(\xi(a_1),\ldots,\xi(a_n)\big)=v_0$, there exists a unique
$(g_1,\ldots,g_m)\in A^m$ such that $\big(\xi(g_1),\ldots,\xi(g_m)\big)=z_0$
and
\begin{equation}\label{interpret}
F[a_1,\ldots,a_n,g_1,\ldots, g_m]=0\,.
\end{equation}
\end{lemma}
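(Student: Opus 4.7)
My plan is to derive Lemma~\ref{biller} as a direct specialization of Biller's general implicit function theorem for continuous inverse algebras, \cite[Theorem~8.2]{Bil2004}, observing that the hypothesis $\wh{A}=\{\xi\}$ collapses all joint-spectrum conditions into simple evaluations against $\xi$. Indeed, for any tuple $(b_1,\ldots,b_p)\in A^p$ the joint spectrum is the single point
\[
\sigma(b_1,\ldots,b_p)=\big\{(\xi(b_1),\ldots,\xi(b_p))\big\}\subseteq\setC^p,
\]
so the functional calculus element $F[a_1,\ldots,a_n,g_1,\ldots,g_m]$ is defined precisely when $(\xi(a_1),\ldots,\xi(a_n),\xi(g_1),\ldots,\xi(g_m))$ lies in $V\times Z$; in particular, any $g\in A^m$ with $(\xi(g_1),\ldots,\xi(g_m))\in Z$ is admissible.

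For \emph{existence}, I would invoke \cite[Theorem~8.2]{Bil2004}: it furnishes open neighborhoods $U\subseteq V$ of $v_0$ and $W\subseteq Z$ of $z_0$ together with a rule $\Phi$ assigning to each $a\in A^n$ with $\sigma(a)\subseteq U$ a unique $g\in A^m$ with $\sigma(g)\subseteq W$ solving $F[a,g]=0$. Since $\sigma(a_1,\ldots,a_n)=\{v_0\}\subseteq U$ by hypothesis, I set $(g_1,\ldots,g_m):=\Phi(a_1,\ldots,a_n)$. To verify $(\xi(g_1),\ldots,\xi(g_m))=z_0$, I would apply the naturality of the holomorphic functional calculus (Remark~\ref{remfuncalc}(a),(b)) to the continuous unital homomorphism $\xi\colon A\to\setC$: the equation $F[a,g]=0$ descends to
\[
F(v_0,\xi(g_1),\ldots,\xi(g_m))=0\quad\text{in }\setC^m,
\]
and the classical implicit function theorem, available because $\pa_z F(v_0,z_0)\in\GL_m(\setC)$, forces $(\xi(g_1),\ldots,\xi(g_m))=z_0$ provided $W$ was taken small enough.

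For \emph{uniqueness}, the pleasant observation is that any $(g_1',\ldots,g_m')\in A^m$ with $(\xi(g_1'),\ldots,\xi(g_m'))=z_0$ automatically has joint spectrum $\{z_0\}\subseteq W$, so Biller's own uniqueness statement applies and yields $(g_1',\ldots,g_m')=(g_1,\ldots,g_m)$. The only step that genuinely requires attention, and thus the main (modest) obstacle, is the initial choice of $W$: it must be small enough that the classical implicit function theorem singles out $z_0$ as the unique root of $F(v_0,\,\cdot\,)$ in $W$, while simultaneously being a valid neighborhood for Biller's theorem. Since Biller's theorem permits arbitrary shrinking of both neighborhoods, this is a technical alignment rather than a conceptual difficulty, and the proof proper will amount to little more than executing the reduction just described.
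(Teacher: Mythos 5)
Your proposal is correct and takes essentially the same route as the paper: the paper presents this lemma without proof, simply as a special case of Biller's implicit function theorem \cite[Theorem~8.2]{Bil2004} for algebras with singleton spectrum. Your detailed verification---that $\wh{A}=\{\xi\}$ collapses the joint spectrum of any tuple to $\{(\xi(b_1),\ldots,\xi(b_p))\}$, that naturality under $\xi$ together with the classical implicit function theorem pins down $(\xi(g_1),\ldots,\xi(g_m))=z_0$, and that any competitor $g'$ with $\xi(g')=z_0$ automatically has joint spectrum in $W$ so Biller's uniqueness applies---is a correct and welcome fleshing-out of what the paper leaves implicit.
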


Given an infinite discrete additive semigroup $X\subseteq [0,\infty)^k$, the 
Dirichlet algebra $\scra=\setC^X$ satisfies the hypotheses of 
Lemma~\ref{biller} when equipped with the product topology.

\begin{lemma}\label{arithCIA}
$\scra$ is a commutative continuous inverse algebra whose spectrum is a
singleton, namely $\wh{\scra}=\{\xi\}$ with
$\xi\colon\scra \to\setC$, $f\mapsto f(0)$.
Furthermore, $\scra$ is a Fr\'{e}chet space $($and
hence complete$)$.
\end{lemma}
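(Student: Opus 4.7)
The plan is to verify the four assertions in order — Fréchet, topological algebra, continuous inversion, and unique character. First, since $X$ is a discrete subset of $\setR^k$ it is countable, so $\scra=\setC^X$ with the product topology is a countable product of copies of $\setC$ and therefore a Fréchet space (metrizable, complete, locally convex). Commutativity of $*$ is inherited from commutativity of addition on $X$.

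For continuity of convolution I would argue coordinate-wise: since the product topology is the topology of pointwise convergence, it suffices to show that $(g,h)\mapsto(g*h)(x)$ is continuous for each fixed $x\in X$. The key observation is that the decomposition set $\{(x',x'')\in X\times X:x'+x''=x\}$ is finite, because $x'+x''=x$ in $[0,\infty)^k$ forces $|x'|,|x''|\leq|x|$, and being discrete in $\setR^k$ forces $X$ to contain only finitely many points of size at most $|x|$. Thus $(g*h)(x)$ is a polynomial in finitely many coordinate projections of $g$ and $h$, hence continuous.

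Openness of $\scra^*=\{g:g(0)\neq 0\}$ is immediate, as it equals the preimage of $\setC\setminus\{0\}$ under the continuous evaluation $\xi\colon g\mapsto g(0)$. For continuity of inversion I would rewrite $g*g^{-1}=u$ as the recursion
\[
g^{-1}(0)=\frac{1}{g(0)},\qquad
g^{-1}(x)=-\frac{1}{g(0)}\sum_{\substack{x'+x''=x\\ x'\neq 0}}g(x')\,g^{-1}(x'')\quad(0\neq x\in X),
\]
and induct on $x\in X$ in the total order $\preceq$ from Section~\ref{S2} to express each $g^{-1}(x)$ as a rational function (with denominator a power of $g(0)$) in finitely many of the values $g(y)$, $|y|\leq|x|$. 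This rational dependence yields continuity of inversion coordinate-wise, which is continuity in the product topology.

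Finally, for $\wh{\scra}=\{\xi\}$, note first that $\xi$ is a continuous unital algebra homomorphism: the only decomposition of $0$ in $[0,\infty)^k$ is $0+0$, so $(g*h)(0)=g(0)h(0)$, and $\xi(u)=1$. Under the bijection between characters and maximal ideals cited from \cite[Lemma~2.5]{Bil2004}, it suffices to show that $\ker\xi$ is the unique maximal ideal. If $I\subseteq\scra$ is any proper ideal and $g\in I$ had $g(0)\neq 0$, then $g\in\scra^*$ would give $u=g*g^{-1}\in I$, contradicting properness; hence every $g\in I$ satisfies $g(0)=0$, i.e., $I\subseteq\ker\xi$. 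Since $\ker\xi$ is itself proper, it is the unique maximal ideal. No step looks like a real obstacle; the decisive common ingredient is the finiteness of decompositions $x'+x''=x$ that comes from discreteness of $X\subseteq[0,\infty)^k$.
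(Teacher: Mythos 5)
Your proof is correct and follows essentially the same line as the paper: openness of $\scra^*=\{g\in\scra:g(0)\neq 0\}$ as the preimage under the continuous evaluation $\xi$, the recursion from the proof of Theorem~\ref{T1} showing that each coordinate $g^{-1}(x)$ is a rational (hence continuous) function of the finitely many values $g(y)$ with $|y|\leq|x|$, and the observation that $\scra\setminus\ker\xi=\scra^*$ forces $\ker\xi$ to be the unique maximal ideal. The one place you go beyond the paper's text is in explicitly verifying coordinate-wise continuity of convolution via the finiteness of $\{(x',x'')\in X\times X:x'+x''=x\}$ — a step the paper tacitly assumes but which is needed for $\scra$ to be a topological algebra at all, so this is a worthwhile addition rather than a deviation.
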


\begin{proof}
First note that $\scra^*=\{f\in\scra:f(0)\not=0\}$, which is an open subset 
of $\scra$. To see this, let $f\in\scra$. The function $\xi$ described in the 
lemma is a unital algebra homomorphism to $\setC$. Hence, if $f\in\scra$ and 
$f(0)=\xi(f)=0$, then $f$ is not invertible. If, on the other hand, 
$f(0)\neq 0$, then the equation $f*g=u$ has a unique solution $g$ in $\scra$, 
by Theorem \ref{T1} (and then $g=f^{-1}$). Given $x\in X\setminus\{0\}$, the 
proof of Theorem~\ref{T1} shows that $f^{-1}(x)$ only depends on $f(y)$ for 
$y$ in the finite set $\{y\in X\colon |y|\leq|x|\}$. Moreover, $f^{-1}(x)$ 
is a rational (and hence continuous) function in the $f(y)$. Therefore the 
inversion map $\scra^*\to\scra$, $f\mapsto f^{-1}$ is continuous and thus 
$\scra$ is a continuous inverse algebra. Since 
$\scra\setminus\ker\xi=\scra^*$, it follows that every proper ideal of 
$\scra$ is contained in $\ker{\xi}$. Hence, if $\eta\in\wh{\scra}$, then 
$\ker{\eta}=\ker{\xi}$ (since $\ker{\eta}$ is a maximal ideal) and thus 
$\eta=\xi$. Hence $\wh{\scra}=\{\xi\}$. Being a countably infinite power of 
the Fr\'{e}chet space~$\setC$, also $\scra=\setC^X$ is a Fr\'{e}chet space.
\end{proof}

In connection with Remark \ref{remfuncalc}\,(a), the following lemma will be 
useful.

\begin{lemma}\label{lemmaclear1}
If $X$ is discrete, then the inclusion map $\lambda\colon\scra_r\to\scra$ is 
a continuous algebra homomorphism, for each $r\in\setR$.
\end{lemma}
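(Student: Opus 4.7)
The plan is to verify three routine assertions in turn: (i) $\scra_r\subseteq\scra$ as sets, (ii) $\lambda$ is an algebra homomorphism, and (iii) $\lambda$ is continuous. There is no substantial obstacle; the task is mainly to record why each point goes through cleanly.

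For (i) I would simply note that, by definition, every $g\in\scra_r$ is a function $X\to\setC$ and therefore an element of $\scra=\setC^X$. For (ii), the unit $u\in\scra_r$ coincides with the unit of $\scra$, and $\setC$-linearity is clear, so everything hinges on matching the two convolution products. The convolution on $\scra$ is well defined because, for each fixed $x\in X$, only finitely many pairs $(x',x'')\in X\times X$ satisfy $x'+x''=x$: since $X\subseteq[0,\infty)^k$, we have $|x'|+|x''|=|x|$, so $|x'|,|x''|\le|x|$, and by the discreteness argument used in the proof of Theorem~\ref{T1} (size $|\cdot|$ well-orders $X$ so that each initial segment is finite) there are only finitely many candidates $x'\in X$ with $|x'|\le|x|$. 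Hence the sum defining $(g*h)(x)$ is a finite sum, and the same formula is used to define convolution in the Banach algebra $\scra_r$; the two operations agree on $\scra_r$.

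For (iii), the product topology on $\scra=\setC^X$ is the initial topology with respect to the family of evaluation maps $\mathrm{ev}_x\colon\scra\to\setC$, $g\mapsto g(x)$, indexed by $x\in X$. For each such $x$, the inequality
\[
|g(x)|\,\le\,e^{r|x|}\,\|g\|_r \qquad (g\in\scra_r),
\]
which follows from the fact that $|g(x)|\,e^{-r|x|}$ is a single summand of the series defining $\|g\|_r$, shows that $\mathrm{ev}_x\circ\lambda\colon\scra_r\to\setC$ is continuous. By the universal property of the product topology, continuity of $\lambda$ follows.

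The only place where one might want to pause is step (ii), where it must be checked that the two convolution products truly coincide rather than merely being specified by the same symbolic formula; the finiteness argument above settles this. The remaining steps are immediate from the definitions.
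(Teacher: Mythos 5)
Your proposal is correct and follows essentially the same route as the paper: identify $\scra_r$ as a subalgebra so that the inclusion is an algebra homomorphism, then use the pointwise estimate $|g(x)|\le e^{r|x|}\|g\|_r$ together with the universal property of the product topology on $\scra=\setC^X$ to get continuity. Your step (ii) simply makes explicit what the paper asserts in one sentence (that $\scra_r$ is a subalgebra), and your continuity estimate is in fact the cleaner version of the bound, since the paper's displayed inequality $|f(x)|\le e^{ry_0x}\|f\|_r$ appears to carry over the Laplace-transform notation $y_0$ where $|x|$ (matching the definition of $\|\cdot\|_r$ in Section~\ref{S1}) is intended.
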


\begin{proof}
Since~$\scra_r$ is a subalgebra of $\scra$, the inclusion map is an algebra 
homomorphism. Because $\lambda$ is linear and 
$|\lambda(f)(x)|=|f(x)|\leq e^{ry_0x}\|f\|_r$, we see that $\scra_r\to\setC$,
$f\mapsto\lambda(f)(x)$ is continuous for each $x\in X$. Since 
$\scra=\setC^X$ is equipped with the product topology, this implies that 
$\lambda$ is continuous.
\end{proof}

\begin{proof}[Proof of Theorem~\ref{T3}.]
Step~1. By Lemma~\ref{arithCIA}, $\scra$ is a complete, commutative continuous
inverse algebra with spectrum $\{\xi\}$. Since
$\big(\xi(a_1),\ldots,\xi(a_n)\big)=\big(a_1(0),\ldots, a_n(0)\big)=v_0$,
Lemma~\ref{biller} shows the existence and uniqueness of
$(g_1,\ldots,g_m)\in\scra^m$ such that conditions \eqref{cond1} and
\eqref{eq1.5} of Theorem~\ref{T3} are satisfied.

To complete the proof, we shall use Theorem~\ref{Tnew} established below (the 
proof of which is independent of Theorem~\ref{T3}).

Step~2. If $a_1,\ldots,a_n\in \scra_\infty$, then Theorem~\ref{Tnew} 
(combined with Example~\ref{E1}) shows that there is a uniquely determined
$m$-tuple $(g_1,\ldots, g_m)\in\scra_\infty^m$ such that \eqref{cond1} and
\eqref{eq1.5} hold in some~$\scra_r$. The elements 
$(g_1,\ldots, g_m)\in\scra_\infty^m$ coincide with the corresponding elements 
of~$\scra$ obtained in Step~1. To see this, pick $r\in\setR$ with 
$a_1,\ldots,a_n,g_1,\cdots,g_m\in\scra_r$ and 
$F[a_1,\ldots,a_n,g_1,\ldots,g_m]=0$ in $\scra_r$. Since the inclusion map 
$\lambda\colon\scra_r\to\scra$ is a continuous algebra homomorphism (by 
Lemma~\ref{lemmaclear1}), we obtain
\begin{equation*}
 0=\lambda\big(F[a_1,\ldots, a_n,g_1,\ldots,g_m]\big)
  =F[\lambda(a_1),\ldots,\lambda(a_n),\lambda(g_1),\ldots,
   \lambda(g_m)]
\end{equation*}
due to the naturality of holomorphic functional calculus (see 
Remark~\ref{remfuncalc}\,(a)). Now the uniqueness assertion from Step~1
applies.

Step~3. In view of \eqref{corresp}, the validity of \eqref{condpre2} and 
\eqref{cond2} for large~$r$ follows from \eqref{cond1new} and 
\eqref{cond2new} in Theorem~\ref{Tnew}.

Step~4.
In Theorem~\ref{Tnew}, \eqref{cond0new}, \eqref{cond1new} and 
\eqref{cond2new} imply \eqref{cond3new}. Hence, as a special case,
\eqref{cond1}, \eqref{condpre2} and \eqref{cond2} imply \eqref{eq1.5} and 
thus determine $(g_1,\ldots,g_m)$.
\end{proof}

%%%%%%%%%%%%%%%%%%%%%%%%%%%%%%%%%%%%%%%%%%%%%%%%%%%%%%%%%%%%%%%%%%%%%%%%%%%%%

\section{Technical preliminaries} \label{S5}

%%%%%%%%%%%%%%%%%%%%%%%%%%%%%%%%%%%%%%%%%%%%%%%%%%%%%%%%%%%%%%%%%%%%%%%%%%%%%

The measures required for our purposes are (possibly unbounded\,!) complex 
Radon measures. Since a suitable reference describing the relevant aspects of 
their theory does not seem to be available, we add this section for the 
convenience of readers with a standard knowledge of measure theory. Various 
results on Laplace transforms are also provided. Our main sources are 
\cite{BCR1984}, \cite{Bou1969}, and \cite{FD1988}.

\emph{The measure-theoretic setting}.
Given a Hausdorff topological space~$X$, let $\delta(X)$ be the $\delta$-ring 
generated by the set $\scrk(X)$ of compact subsets of~$X$ (thus $\delta(X)$ 
is the smallest set containing $\scrk(X)$ and closed under finite unions, 
relative complements and countable intersections). A function 
\mbox{$\mu\colon\delta(X)\to\setC$} is called a \emph{complex measure} if
$\mu(B)=\sum_{n=1}^\infty\mu(B_n)$ for all sequences 
$(B_n)_{n\in \setN}$ of disjoint sets $B_n\in \delta(X)$ such that 
$B:=\bigcup_{n\in\setN}B_n\in\delta(X)$. If, furthermore, 
$\mu(B)\in[0,\infty)$ for each $B\in\delta(X)$, then $\mu$ is called a 
\emph{positive measure}. To any \mbox{complex} measure $\mu$,
\cite[Proposition II.1.3]{FD1988} associates a positive measure $|\mu|$
(the total variation measure).
A complex measure $\mu$ is called a \emph{Radon measure} if $|\mu|$ is inner 
regular, i.e., $|\mu|(B)=\sup_{K\in\scrk(B)}|\mu|(K)$. If $\mu$ is a 
positive Radon measure on~$X$, then $\mu|_{\scrk(X)}$ is a Radon content in 
the sense of \cite[Definition~2.1.2]{BCR1984} and hence extends uniquely to 
an inner regular measure $\wb{\mu}\colon\scrb(X)\to[0,\infty]$ on the Borel 
$\sigma$-algebra $\scrb(X)$ of~$X$ (see \cite[Theorem~2.1.4]{BCR1984}). By 
abuse of notation, we shall frequently write~$\mu$ in place of~$\wb{\mu}$.

\begin{remark}\label{patch}
Note that every measure $\mu$ gives rise to a family
$(\mu_K)_{K\in\scrk(X)}$ of measures $\mu_K:=\mu|_{\scrb(K)}$ which are 
compatible in the sense that $\mu_L|_{\scrb(K)}=\mu_K$ for all 
$K,L\in\scrk(X)$ with $K\subseteq L$. Here~$\mu$ is positive if and only if 
each $\mu_K$ is positive. Since
\begin{equation}\label{dirunion}
\delta(X)= \bigcup_{K\in \scrk(X)}\scrb(K)\,,
\end{equation}
it is easy to see that, conversely, every compatible family
$(\mu_K)_{K\in\scrk(X)}$ defines a complex Radon measure~$\mu$ via 
$\mu|_{\scrb(K)}:=\mu_K$ for $K\in \scrk(X)$.
\end{remark}

\noindent
If $\mu$ is a complex Radon measure on~$X$, then $\mu_K$ is a bounded Radon 
measure for each $K\in \scrk(X)$ and thus 
$\mu_K=i\mu_K^1-\mu_K^2-i\mu_K^3+\mu_K^4$ with positive Radon measures 
$\mu_K^j$, $j\in \{1,\ldots, 4\}$, where $\mu_K^4$ and $\mu_K^2$ are the 
positive and negative variations of the real part of~$\mu_K$, respectively, 
and $\mu_K^1$, $\mu_K^3$ are those of its imaginary part 
(see \cite[\S6.6]{Rud1987}). Since $\mu_K^j\leq |\mu|_K$, the measure 
$\mu_K^j$ has a density with respect to the Radon measure $|\mu|_K$ (by the 
Radon-Nikodym theorem), entailing that~$\mu_K^j$ is inner regular. By 
Remark~\ref{patch}, the families $(\mu_K^j)_{K\in\scrk(X)}$ determine 
positive Radon measures~$\mu_j$ on $X$ for $j\in\{1,\ldots,4\}$ such 
that $\mu_j\leq |\mu|$ and
\begin{equation}\label{Jordan}
\mu = i\mu_1-\mu_2-i\mu_3+\mu_4\,.
\end{equation}
We say that a Borel measurable function $f\colon X\to\setC$ is 
\emph{$\mu$-integrable} if $f\in L^1(X,|\mu|)$. In this case, we write
$f=\sum_{j=1}^4 i^j f_j$ with $0\leq f_j\in L^1(X,|\mu|)$ and 
$\mu=\sum_{k=1}^4i^k\mu_k$ with positive Radon measures $\mu_k$ such that
$\mu_k\leq|\mu|$ and define%
\footnote{Typically, $f_4$ and $f_2$ (resp., $f_1$ and $f_3$) are the 
positive and negative parts of the real part (resp., imaginary part) 
of~$f$, and $\mu_1,\ldots,\mu_4$ are as in \eqref{Jordan}.}
\[
\int_Xf\,d\mu := \sum_{j,k=1}^4i^{j+k}\int_Xf_j\,d\mu_k\,.
\]
Let $M(X)$ be the space of complex Radon measures on~$X$, let 
$M_0(X):=\{\mu\in M(X):\|\mu\|:=|\mu|(X)<\infty\}$ be the space of bounded 
complex Radon measures, and $M_+(X)$ be the set of positive Radon measures. 
Then $(M_0(X),\|\cdot\|)$ is a Banach space, because the space of all
bounded complex measures on~$X$ is a Banach space (see 
\cite[II.1.5]{FD1988}) and also $M(K)$ is a Banach space for each 
$K\in\scrk(X)$ (entailing, in view of Remark~\ref{patch}, that limits of 
complex Radon measures are again Radon).

If $\mu\in M_0(X)$ in \eqref{Jordan}, then 
$\mu_j\in M_{0,+}(X):=M_0(X)\cap M_+(X)$ for each $j\in \{1,\ldots,4\}$, and 
$\wb{\mu}:=i\,\wb{\mu}_1-\wb{\mu}_2-i\,\wb{\mu}_3+\wb{\mu}_4$ is the unique 
extension of~$\mu$ to an (ordinary) complex measure on $(X,\scrb(X))$ whose 
total variation is inner regular. Again, we usually write $\mu$ instead of 
$\wb{\mu}$.

Given Hausdorff spaces $X_j$ for $j\in\{1,2\}$ and $\mu_j\in M_+(X)$, there 
exists a unique positive Radon measure $\mu_1\otimes\mu_2$ on $X_1\times X_2$ 
such that
\begin{align}
(\mu_1\otimes\mu_2)(B_1\times B_2) = \mu_1(B_1)\mu_2(B_2)\,
 &   \text{ for all $B_1\in \scrb(X_1)$}\label{charprodm} \\
 &   \text{ and $B_2\in \scrb(X_2)$}\notag
\end{align}
(see \cite[Corollary~2.1.11]{BCR1984}). Since 
$(\mu_1+t \nu_1)\otimes\mu_2=\mu_1\otimes \mu_2+t (\nu_1\otimes\mu_2)$ and 
$\mu_1\otimes(\mu_2+t \nu_2)=\mu_1\otimes\mu_2+t(\mu_1\otimes\nu_2)$ for all 
$\mu_j,\nu_j\in M_{0,+}(X_j)$ and $t\geq 0$, where $M_0(X_j)$ is spanned by 
$M_{0,+}(X_j)$ as a complex vector space, standard arguments provide a unique 
complex bilinear map
\[
\beta\colon M_0(X_1)\times M_0(X_2)\to M_0(X_1\times X_2)
\]
such that $\beta(\mu_1,\mu_2)=\mu_1\otimes\mu_2$ for all 
$\mu_j\in M_{0,+}(X_j)$. We write $\mu_1\otimes\mu_2:=\beta(\mu_1,\mu_2)$ 
also for general $\mu_j\in M_0(X_j)$. Then
\begin{equation}\label{newthing}
|\mu_1\otimes\mu_2|\leq |\mu_1|\otimes|\mu_2|\quad
\mbox{for all $\,\mu_1\in M_0(X_1)$, $\mu_2\in M_0(X_2)$.}
\end{equation}
To see this, note first that
\begin{equation}\label{newthing2}
(\rho_1\odot\mu_1)\otimes (\rho_2\odot \mu_2) =
(\rho_1\otimes \rho_2)\odot (\mu_1\otimes\mu_2)
\end{equation}
for all $\mu_j\in M_+(X_j)$ and Borel measurable functions
$\rho_j\colon X_j\to[0,\infty]$ such that $\int_K\rho_j\,d\mu_j<\infty$ for 
each $K\in\scrk(X_j)$ for $j\in\{1,2\}$, where
\[
\rho_1\otimes\rho_2\colon X_1\times X_2\to\setC\,,\quad
(x_1,x_2)\mapsto\rho_1(x_1)\rho_2(x_2)\,.
\]
In fact, the right hand side of \eqref{newthing2} is a positive Radon 
measure which satisfies the characterization of the product measure on the 
left (cf. \eqref{charprodm}).

As a consequence of the Radon-Nikodym theorem, $\mu_j$ as in \eqref{newthing} 
admits a polar decomposition $\mu_j=\rho_j\odot|\mu_j|$, for a suitable 
measurable function $\rho_j\colon X_j\to \setC$ such that $|\rho_j(x)|=1$ for 
each $x\in X_j$ (see \cite[Theorem~6.12]{Rud1987}). We write
$\rho_j=\sum_{k=1}^4i^k\rho_j^k$ with $\rho_j^1:=\imp(\rho_j)_+$,
$\rho_j^2:=\re(\rho_j)_-$, $\rho_j^3:=\imp(\rho_j)_-$ and
$\rho_j^4:=\re(\rho_j)_+$. Then, by \eqref{newthing2} and the
bilinearity of $\otimes$ and $\odot$,
\begin{align*}
\mu_1\otimes\mu_2
 & = \sum_{k,\ell=1}^4i^{k+\ell}(\rho_1^k\odot|\mu_1|)\otimes
     (\rho_2^\ell\odot|\mu_2|) \\
 & = \sum_{k,\ell=1}^4i^{k+\ell}(\rho_1^k\otimes\rho_2^\ell)\odot
     (|\mu_1|\otimes |\mu_2|) \\
 & = (\rho_1\otimes\rho_2)\odot (|\mu_1|\otimes|\mu_2|)\,,
\end{align*}
entailing that $|\mu_1\otimes\mu_2|=|\mu_1|\otimes|\mu_2|$.
Thus \eqref{newthing} holds.

We now return to the situation described in Section \ref{S1}, where 
$C\subseteq\setR^k$ is a pointed and generating closed convex cone and 
$X\subseteq C$ a (continuously embedded) topological semigroup with 
continuous addition
\begin{equation}\label{addmap}
\alpha\colon X\times X\to X\,,\quad
(x_1,x_2)\mapsto x_1+x_2\,.
\end{equation}
It is useful to observe that each $\mu\in M_{r,+}(X):=M_r(X)\cap M_+(X)$ (with 
$M_r(X)$, $y_0$ and $\|\cdot\|_r$ as in the introduction) is a $\sigma$-finite 
measure, since
\[
\int_X e^{-rxy_0}\,d\mu(x)=\|\mu\|_r<\infty\,,
\]
where $e^{-rxy_0}>0$ for each $x\in X$. Given $\mu_1,\mu_2\in M_+(X)$, we 
define their \emph{convolution} as the image measure
\[
\mu_1*\mu_2 := \alpha(\mu_1\otimes\mu_2)
\]
on $\scrb(X)$; thus $(\mu_1*\mu_2)(B):=(\mu_1\otimes\mu_2)(\alpha^{-1}(B))$. 
If $\mu_1*\mu_2$ is finite on compact sets, then $\mu_1*\mu_2$ is a positive 
Radon measure~\cite[Proposition~2.1.15]{BCR1984}. In particular, 
$\mu_1*\mu_2$ is a positive Radon measure if $\mu_1,\mu_2\in M_{r,+}(X)$ for 
some $r\in\setR$. In fact, for each compact set $K\subseteq X$, we have 
$a:=\inf\{e^{-rxy_0}\colon x\in K\}>0$ and
\begin{align}
a\mu(K) 
 & \leq \int_X e^{-rxy_0}\,d(\mu_1*\mu_2)(x)=
        \int_X e^{-rxy_0}\,d\alpha(\mu_1\otimes\mu_2)(x) \notag \\
 & =    \int_{X\times X} e^{-r\alpha(x_1,x_2)y_0}\,d(\mu_1\otimes \mu_2)
        (x_1,x_2)\notag \\
 & =    \int_{X\times X} e^{-rx_1y_0}\,e^{-rx_2y_0}\,d(\mu_1\otimes \mu_2)
        (x_1,x_2) \notag \\
 & =    \int_X e^{-rx_1y_0}\,d\mu_1(x_1) \int_X e^{-rx_2y_0}\,d\mu_2(x_2)<
        \infty \label{reusefubini}
\end{align}
(by Transformation of Integrals and Fubini's Theorem),%
\footnote{Since $\mu_1,\mu_2$ are $\sigma$-finite measures and the map 
$X\times X\to\setC$, $(x_1,x_2)\mapsto e^{-rx_1y_0} e^{-rx_2y_0}$ is 
$\scrb(X)\otimes\scrb(X)$-measurable, the standard Fubini theorem (as in 
\cite[Theorem~8.12]{Rud1987}) suffices; we do not need specialized versions 
for Radon measures like \cite[Theorem~2.1.12]{BCR1984}.} 
whence $\mu(K)<\infty$. The previous calculation also shows that
\begin{equation}\label{pospieces}
\mu_1*\mu_2\in M_r(X)\quad\mbox{for all $\mu_1,\mu_2\in
M_{r,+}(X)$.}
\end{equation}
Since $(\mu_1+t \nu_1)*\mu_2=\mu_1*\mu_2+t (\nu_1*\mu_2)$ and
$\mu_1*(\mu_2+t\nu_2)=\mu_1*\mu_2+t(\mu_1*\nu_2)$ for all
$\mu_j,\nu_j\in M_{r,+}(X)$ and $t\geq 0$, where $M_r(X)$ is spanned by 
$M_{r,+}(X)$ as a complex vector space, standard arguments provide a unique 
complex bilinear map
\begin{equation}\label{firstconv}
M_r(X)\times M_r(X)\to M_r(X)\,,\quad (\mu_1,\mu_2)\mapsto \mu_1*\mu_2
\end{equation}
extending the convolution of measures in $M_{r,+}(X)$ already defined.

If $\mu_1,\mu_2\in M_0(X)$, then
\begin{equation}\label{altconv}
\mu_1*\mu_2 = \alpha(\mu_1\otimes\mu_2)\,.
\end{equation}
In fact, if we re-define convolution via \eqref{altconv}, then
$|\mu_1*\mu_2|\leq |\mu_1|*|\mu_2|$ (as a consequence of \eqref{newthing}) 
and thus $\mu_1*\mu_2\in M_0(X)$ with
\begin{equation}\label{henceba}
\|\mu_1*\mu_2\| \leq (|\mu_1|*|\mu_2|)(X) = \|\mu_1\|\cdot \|\mu_2\|\,.
\end{equation}
Since $\mu_1*\mu_2$ from \eqref{altconv} coincides with the old definition in 
the case of positive measures and is bilinear in $(\mu_1,\mu_2)$, it 
coincides with the convolution defined in~\eqref{firstconv}. It is clear from 
\eqref{altconv} that convolution is associative and commutative (because so
is~$\alpha$). Since also \eqref{henceba} holds and $\|\delta_0\|=1$, we see 
that $(M_0(X),*,\|\cdot\|)$ is a unital commutative Banach algebra. 
For each $r\in\setR$, the map
\[
\phi\colon M_r(X)\to M_0(X)\,,\quad \mu\mapsto e^{-rxy_0}\,d\mu(x)
\]
is a surjective linear isometry and an isomorphism of algebras. Hence also 
$(M_r(X),*,\|\cdot\|_r)$ is a unital commutative Banach algebra.

To complete our discussion of measures, let us show that
$M_\infty(X)$ is indeed independent of the choice of~$y_0$ (as
claimed in the introduction).

\begin{lemma}
If $y_0,y_1\in (C^\star)^0$, then
$\bigcup_{r\in \setR}M_r^{(y_0)}(X)=\bigcup_{r\in \setR}M_r^{(y_1)}(X)$.
\end{lemma}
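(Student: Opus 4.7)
The plan is to compare the two weights $e^{-rxy_0}$ and $e^{-rxy_1}$ uniformly on $C$ via a bi-Lipschitz estimate between the linear functionals $x\mapsto xy_0$ and $x\mapsto xy_1$ on $C$. Once this is in hand, the inclusion of the two unions becomes a simple rescaling in the exponent.

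First I would prove the key inequality: there exist constants $0<c_1\leq c_2$ with
\[
c_1\cdot xy_0 \;\leq\; xy_1 \;\leq\; c_2\cdot xy_0\qquad(x\in C).
\]
For this, consider the \emph{base} $S:=\{x\in C:xy_0=1\}$ of the cone. It is closed as the intersection of $C$ with a hyperplane. For boundedness, suppose for contradiction that $\|x_n\|\to\infty$ along a sequence in $S$; passing to a subsequence, the unit vectors $x_n/\|x_n\|$ converge in $C$ (since $C$ is closed) to some $u\in C$ with $\|u\|=1$, and then $uy_0=\lim(x_ny_0)/\|x_n\|=\lim 1/\|x_n\|=0$, contradicting $y_0\in(C^\star)^\circ$ via the property $xy_0>0$ for $x\in C\setminus\{0\}$ already recorded in the introduction. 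Hence $S$ is compact. On $S$ the continuous function $x\mapsto xy_1$ is strictly positive (for the same reason, applied to $y_1$), so it attains positive minimum $c_1$ and maximum $c_2$, and the claimed inequality propagates to all of $C$ by homogeneity.

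Second, I would deduce the inclusion $M_r^{(y_0)}(X)\subseteq M_{r'}^{(y_1)}(X)$ for a suitable $r'=r'(r)\in\setR$. If $r\geq 0$, choose $r':=r/c_1\geq 0$; then for every $x\in X\subseteq C$,
\[
r'xy_1\;\geq\; r'c_1\,xy_0\;=\;r\,xy_0,
\]
hence $e^{-r'xy_1}\leq e^{-rxy_0}$, and integrability transfers. If $r<0$, then $e^{-rxy_0}\geq 1$ on $C$, so the finiteness of $\|\mu\|_r^{(y_0)}$ forces $|\mu|(X)<\infty$; then any $r'\geq 0$ works because $e^{-r'xy_1}\leq 1$ on $C$. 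By symmetry in $y_0$ and $y_1$, the reverse inclusion of unions also holds, yielding equality.

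The only non-routine ingredient is the compact-base argument supporting the bi-Lipschitz comparison; once that is isolated, the rest is a direct exponent rescaling and causes no difficulty.
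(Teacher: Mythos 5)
Your proof is correct, but it reaches the key comparison $xy_1\geq c_1\, xy_0$ on $C$ by a different route than the paper. The paper observes directly that $y_1\in(C^\star)^\circ$ means $y_1-\ve y_0\in C^\star$ for some $\ve>0$, so $xy_1=\ve\, xy_0+x(y_1-\ve y_0)\geq \ve\, xy_0$ immediately, and then $\|\mu\|_r^{(y_1)}\leq\|\mu\|_{r\ve}^{(y_0)}$ for $r\geq 0$ gives the inclusion of unions in one line (the opposite inclusion by symmetry). You instead build the compact base $S=\{x\in C:xy_0=1\}$, prove its compactness via the unit-sphere limiting argument, and extract $c_1,c_2$ as min and max of $x\mapsto xy_1$ on $S$. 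This buys a full two-sided bi-Lipschitz estimate $c_1\,xy_0\leq xy_1\leq c_2\,xy_0$ (more than is needed) at the cost of a longer argument; it is the standard compact-base technique and does not rely on recognizing $y_1-\ve y_0\in C^\star$. One small remark: since the paper already records $M_r(X)\subseteq M_t(X)$ for $r\leq t$, the union over $r\in\setR$ equals the union over $r\geq 0$, so your separate treatment of the case $r<0$ is sound but unnecessary.
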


\begin{proof}
Since $y_1\in (C^\star)^0$, there exists $\ve>0$ such that
$c:=y_1-\ve y_0\in C^\star$ and thus $y_1=\ve y_0 +c$. Then
$rxy_1=r \ve x y_0 +r xc\geq r \ve x y_0$ for each $r\geq 0$ and $x\in X$, 
whence $\|\mu\|_r^{(y_1)}\leq \|\mu\|_{r\ve}^{(y_0)}$ for each 
$\mu\in M^{(y_0)}_{r\ve}(X)$ and thus 
$M_{r\ve}^{(y_0)}(X)\subseteq M_r^{(y_1)}(X)$. Hence
$\bigcup_{r\in \setR}M_r^{(y_0)}(X)\subseteq
\bigcup_{r\in \setR}M_r^{(y_1)}(X)$. The opposite inclusion can be shown 
analogously.
\end{proof}

\noindent\emph{Some basic facts concerning Laplace transforms}.
Fourier-Laplace transforms have the following properties (part of which will 
be essential later).

\begin{lemma}\label{propslapl}
Let $C\subseteq \setR^k$ be a pointed and generating closed convex cone, 
$X\subseteq C$ be a continuously embedded topological semigroup with 
$0\in X$, and $y_0\in (C^\star)^\circ$. Let $r\in \setR$ and 
$\mu\in M_r(X)$. Then the following holds:
\begin{itemize}
\item[\rm(a)]
The function $\wt{\mu}$ is holomorphic on the interior of~$D_\mu$.
\item[\rm(b)]
On $ry_0+C^\star+i\setR^k$, the function~$\wt{\mu}$ is continuous.
\item[\rm(c)] 
For each $\ve>0$, there is $\rho\in [r,\infty)$ such that
\[
|\wt{\mu}(s)-\wt{\mu}(\infty)|\leq \ve\quad
\mbox{for each $\,s\in \rho y_0+C^\star+i\setR^k$.}
\]
\item[\rm(d)]
If also $\nu\in M_r(X)$ and $\wt{\mu}|_U=\wt{\nu}|_U$ for some non-empty open 
set $U\subseteq\setC^k$ $($or $U\subseteq\setR^k)$ such that 
$U\subseteq D_\mu\cap D_\nu$, then $\mu=\nu$.
\end{itemize}
\end{lemma}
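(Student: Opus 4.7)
The plan is to reduce parts (a)--(c) to dominated-convergence arguments anchored on the integrable majorant $e^{-rxy_0}\,d|\mu|(x)$, and to obtain (d) from the identity theorem for several complex variables combined with uniqueness of the Fourier transform of bounded complex Radon measures on $\setR^k$.

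For (a), I would work locally near any $s_0\in (D_\mu)^\circ$. Using \eqref{Jordan} to pass to positive $\mu$, I shrink a neighbourhood $W$ of $s_0$ to a complex polydisk whose real part is a small box inside $E:=\{t\in\setR^k:\int e^{-x\cdot t}\,d|\mu|(x)<\infty\}$, so that $|e^{-x\cdot s}|=e^{-x\cdot\re s}$ is majorised uniformly on $W$ by a finite sum $\sum_i e^{-x\cdot t_i}$ with $t_i\in E$; differentiation under the integral in each complex variable (or Morera plus Fubini) then delivers holomorphy of $\wt{\mu}$ on $(D_\mu)^\circ$. For (b), on $ry_0+C^\star+i\setR^k$ the inequality $x\cdot\re s\geq rxy_0$ for $x\in X\subseteq C$ supplies the uniform majorant $e^{-rxy_0}\in L^1(|\mu|)$, so continuity of $\wt{\mu}$ is immediate from dominated convergence. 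For (c), I separate the atom at $0$ via $\wt{\mu}(\infty)=\mu(\{0\})$ and note that
\[
|\wt{\mu}(s)-\wt{\mu}(\infty)|\leq \int_{X\setminus\{0\}}e^{-\rho xy_0}\,d|\mu|(x)\qquad(s\in\rho y_0+C^\star+i\setR^k);
\]
since $xy_0>0$ for each $x\in C\setminus\{0\}$, the integrand tends pointwise to $0$ as $\rho\to\infty$ while being dominated by $e^{-rxy_0}\in L^1(|\mu|)$, so dominated convergence forces the right-hand side below any prescribed $\ve>0$.

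The main work is (d). I put $\sigma:=\mu-\nu$ and choose $r$ with $\sigma\in M_r(X)$; by (a), $\wt{\sigma}$ is holomorphic on $(D_\sigma)^\circ$, which equals $E^\circ+i\setR^k$ with $E^\circ$ non-empty and convex, and is therefore a connected open subset of $\setC^k$. The identity theorem then forces $\wt{\sigma}\equiv 0$ on $(D_\sigma)^\circ$ from its vanishing on $U$: in the complex case this is standard, while for $U\subseteq\setR^k$ I observe that the iterated real partials of $\wt{\sigma}$ at any $u\in U$ vanish and coincide with the complex partials, so $\wt{\sigma}$ is zero on a complex polydisk around~$u$ and the usual identity theorem takes over. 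Fixing any $s_0\in (D_\sigma)^\circ$, the bounded measure $\tau:=e^{-x\cdot s_0}\odot\sigma\in M_0(X)$ has Fourier transform $\wt{\sigma}(s_0+it)=0$ at every $t\in\setR^k$. Pushing $\tau$ forward along the continuous inclusion $\iota\colon X\hookrightarrow\setR^k$ yields a bounded complex Radon measure on $\setR^k$ whose Fourier transform vanishes identically; classical Fourier uniqueness on $\setR^k$ gives $\iota_*\tau=0$, and inner regularity of $\tau$ together with the fact that $\iota|_K\colon K\to\iota(K)$ is a homeomorphism for each compact $K\subseteq X$ then forces $\tau=0$. Because $e^{-x\cdot s_0}$ is nowhere zero, $\sigma=0$, finishing (d). The main obstacle is this chain of reductions in (d), in particular the real-open case of the identity step and the translation of $\iota_*\tau=0$ back to $\tau=0$; the other parts are routine once the exponential majorants are in place.
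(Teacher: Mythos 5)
Your treatment of (a), (b), (c) matches the paper in substance: Jordan decomposition plus dominated convergence against the integrable majorant $e^{-rxy_0}$ for (b) and (c), and a local-majorant / differentiation-under-the-integral argument for (a). The paper handles (a) a bit more economically by splitting $\mu$ into four positive pieces and citing Neeb's Proposition V.4.6 for holomorphy of the Laplace transform of a positive measure, but the underlying mechanism is the one you describe.

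Part (d) is where you take a genuinely different route, and it is a legitimate one. The paper first treats \emph{positive} $\mu,\nu$ by pushing forward along $\lambda\colon X\hookrightarrow \setR^k$ and invoking uniqueness of the Laplace transform of a positive measure on $\setR^k$ (citing Theorem 14.11(e) of \cite{Glo2003}), recovering $\mu=\nu$ from $\mu|_{\scrk(X)}=\nu|_{\scrk(X)}$; it then reduces the general complex case to the positive one by a conjugation trick borrowed from \cite{BCR1984}: since $D_\mu^0$ is invariant under $s\mapsto \overline{s}$, the relation $\overline{\wt{\mu}(s)}=\wt{\overline{\mu}}(\overline{s})$ combined with the Jordan decomposition $\mu=i\mu_1-\mu_2-i\mu_3+\mu_4$ yields $\wt{\mu}_2=\wt{\mu}_4$ and $\wt{\mu}_1=\wt{\mu}_3$, after which the positive case finishes the job. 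You instead set $\sigma=\mu-\nu$, fix a real basepoint $s_0$, tilt to the bounded complex measure $\tau=e^{-x\cdot s_0}\odot\sigma$, push it forward to $\setR^k$, and invoke Fourier-transform uniqueness for \emph{complex} bounded Radon measures on $\setR^k$ directly, then recover $\tau=0$ from $\iota_*\tau=0$ via inner regularity and the fact that $\iota|_K$ is a homeomorphism onto its image for each compact $K\subseteq X$. Your route avoids the conjugation gymnastics and the reduction to positive measures, at the price of needing the (classical, but stronger-looking) complex Fourier uniqueness theorem on $\setR^k$ rather than just the positive Laplace uniqueness theorem; the paper's route keeps the external input weaker (a statement about positive measures) and handles the complex extension internally. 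Both arguments also rely on the same small points that deserve explicit mention in a final writeup: $(D_\sigma)^\circ$ is a nonempty convex tube, hence connected, so the identity theorem applies; and the real-slice case of the identity step works because the holomorphic function's Taylor coefficients at a point of $U\subseteq\setR^k$ are determined by the vanishing real partials.

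One spot to tighten in your (d): when you pass from $\iota_*\tau=0$ back to $\tau=0$, you should say explicitly that for $B\in\scrb(K)$ one has $\iota(B)\in\scrb(\setR^k)$ and $\tau(B)=(\iota_*\tau)(\iota(B))$ by injectivity of $\iota$, and then appeal to the directed union $\delta(X)=\bigcup_{K\in\scrk(X)}\scrb(K)$ from Remark~\ref{patch}. You should also note (briefly) that $\iota_*\tau$ is again a bounded Radon measure, so that the classical uniqueness theorem applies; this follows from $|\iota_*\tau|\le \iota_*|\tau|$ and inner regularity of the image of a bounded positive Radon measure under a continuous map.
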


\begin{proof}
(a) Write $\mu=i\mu_1-\mu_2-i\mu_3+\mu_4$ as a linear combination of positive 
measures, as in~\eqref{Jordan}. Then 
$\wt{\mu}=i\wt{\mu}_1-\wt{\mu}_2-i\wt{\mu}_3+\wt{\mu}_4$ on $D_\mu^0$, where 
$D_\mu^0\subseteq D_{\mu_j}^0$ and $\wt{\mu}_j$ is holomorphic on 
$D_{\mu_j}^0$ for each $j\in\{1,2,3,4\}$, by 
\cite[Proposition~V.4.6]{Nee2000}.

(b) Let $(s_n)_{n\in \setN}$ be convergent sequence in
$ry_0+C^\star+i\setR^k$, with limit~$s$. Then $\wt{\mu}(s_n)\to\wt{\mu}(s)$ 
as $n\to\infty$ by Lebesgue's dominated convergence theorem, using the 
integrable majorant $X\to[0,\infty)$, $x\mapsto e^{-rxy_0}$.

(c) If $s_n=\rho_n+t_n$ with $t_n\in C^\star+i\setR^k$ and
$\rho_n\in [r,\infty)$ such that $\rho_n\to\infty$, then
$e^{-s_nx}\to {\bf 1}_{\{0\}}(x)$ (with ${\bf 1}_{\{0\}}\colon X\to\{0,1\}$ 
the characteristic function of~$\{0\}$) because 
$|e^{-s_nx}|\leq e^{-\rho_n x y_0}$, where $xy_0>0$ for 
$x\in X\setminus\{0\}$. Hence 
$\wt{\mu}(s_n)\to\int_X {\bf 1}_{\{0\}}(x)\,d\mu(x)=\mu(\{0\})=
\wt{\mu}(\infty)$, by dominated convergence with majorant $e^{-rxy_0}$.

(d) Suppose first that $\mu$ and $\nu$ are positive measures. Let
$\lambda\colon X\to\setR^k$ be the inclusion map and $\mu_1:=\lambda(\mu)$, 
$\nu_1:=\lambda(\nu)$ be the image measures on $\setR^k$ (equipped with the 
Borel $\sigma$-algebra). Then $\mu_1=\nu_1$ (for example, by
\cite[Theorem~14.11\,(e)]{Glo2003}) and 
$\mu|_{\scrk(X)}=\mu_1|_{\scrk(X)}=\nu_1|_{\scrk(X)}=\nu|_{\scrk(X)}$. 
Consequently $\mu=\nu$.

The general case amounts to injectivity of the linear map $M_r(X)\to\setC^U$, 
$\mu\mapsto \wt{\mu}|_U$, which we prove using an idea from \cite[proof of 
Proposition 6.5.2]{BCR1984}. Suppose that $\mu\in M_r(X)$ and 
$\wt{\mu}|_U=0$. We write
\[
\mu = i\mu_1-\mu_2-i\mu_3+\mu_4
\]
with positive measures $\mu_1,\ldots,\mu_4\in M_r(X)$, as in \eqref{Jordan}. 
Then $\wt{\mu}|_{D_\mu^0}=0$ by the Identity Theorem for analytic functions, 
and thus
\begin{equation}\label{firstlinc}
i\,\wt{\mu}_1(s)-\wt{\mu}_2(s)-i\,\wt{\mu}_3(s)+\wt{\mu}_4(s) = 0
\quad\text{for all $s\in D_\mu^0$.}
\end{equation}
Since $D_\mu^0$ is invariant under the complex conjugation 
$\overline{\phantom{c}}$\,, we see that also 
$0=\overline{\wt{\mu}(s)}=\wt{\overline{\mu}}(\overline{s})$ for all 
$s\in D_\mu^0$ and hence
\begin{equation}\label{secondlinc}
 -i\,\wt{\mu}_1(s)-\wt{\mu}_2(s)+i\,\wt{\mu}_3(s)+\wt{\mu}_4(s)=0 \quad
 \text{for all $s\in D_\mu^0$.}
\end{equation}
Adding \eqref{firstlinc} and \eqref{secondlinc}, we deduce that
$\wt{\mu}_2(s)=\wt{\mu}_4(s)$ and hence also
$\wt{\mu}_1(s)=\wt{\mu}_3(s)$, for all $s\in D_\mu^0$. Then
$\mu_1=\mu_3$ and $\mu_2=\mu_4$ by the case of positive measures already 
discussed, and hence $\mu=0$.
\end{proof}

The next lemma enables us to use the naturality of holomorphic functional 
calculus (see Remark~\ref{remfuncalc}\,(a)) in connection with Laplace 
transforms.

\begin{lemma}\label{lemmaclear2}
For each $s\in ry_0 +C^\star+i\setR^k$, the map
\[
 h_s\colon M_r(X) \to\setC\,, \quad
 h_s(\mu):=\int_X e^{-sx}\,d\mu(x)
\]
is a continuous algebra homomorphism. Furthermore,
\begin{equation}\label{istklar}
h_s(\mu)=\wt{\mu}(s)\,.
\end{equation}
\end{lemma}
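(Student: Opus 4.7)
The plan is to bound $h_s$ to obtain continuity, verify unitality, and then check multiplicativity by first treating positive measures and extending by bilinearity.

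First, I would write $s=ry_0+t+i\tau$ with $t\in C^\star$ and $\tau\in\setR^k$. Since $y_0\in C^\star$ and $t\in C^\star$, for every $x\in X\subseteq C$ one has $xy_0\geq 0$ and $tx\geq 0$, whence
\[
 |e^{-sx}|\,=\,e^{-rxy_0-tx}\,\leq\,e^{-rxy_0}.
\]
Therefore
\[
 |h_s(\mu)|\,\leq\,\int_X e^{-rxy_0}\,d|\mu|(x)\,=\,\|\mu\|_r,
\]
showing that $h_s$ is a well-defined continuous linear functional on $M_r(X)$ with operator norm at most $1$. The formula \eqref{istklar} is immediate from the definition of $\wt{\mu}$, and $h_s(\delta_0)=e^{-s\cdot 0}=1$, so $h_s$ is unital.

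For multiplicativity on positive measures $\mu_1,\mu_2\in M_{r,+}(X)$, I would combine the defining formula $\mu_1*\mu_2=\alpha(\mu_1\otimes\mu_2)$ with the Transformation of Integrals and Fubini's theorem. The latter is justified exactly as in the estimate preceding \eqref{reusefubini} by the $\mu_1\otimes\mu_2$-integrable majorant $(x_1,x_2)\mapsto e^{-rx_1y_0}e^{-rx_2y_0}$, and yields
\begin{align*}
 h_s(\mu_1*\mu_2)
  &= \int_X e^{-sx}\,d\alpha(\mu_1\otimes\mu_2)(x) \\
  &= \int_{X\times X} e^{-sx_1}e^{-sx_2}\,d(\mu_1\otimes\mu_2)(x_1,x_2) \\
  &= h_s(\mu_1)\,h_s(\mu_2).
\end{align*}

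Finally, since $M_r(X)$ is spanned by $M_{r,+}(X)$ over $\setC$, and convolution $*$ on $M_r(X)\times M_r(X)$ was defined in \eqref{firstconv} as the unique bilinear extension of its restriction to positive measures—while $h_s$ is linear and complex multiplication is bilinear—both sides of the identity $h_s(\mu_1*\mu_2)=h_s(\mu_1)\,h_s(\mu_2)$ are $\setC$-bilinear functions of $(\mu_1,\mu_2)\in M_r(X)\times M_r(X)$. Hence the identity extends from $M_{r,+}(X)\times M_{r,+}(X)$ to all of $M_r(X)\times M_r(X)$, completing the proof. I do not anticipate a serious obstacle: the only delicate step is the Fubini/Transformation of Integrals computation for positive measures, but this has essentially been carried out already in \eqref{reusefubini}.
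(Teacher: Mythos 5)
Your proof is correct and follows essentially the same route as the paper: bound $|e^{-sx}|$ by $e^{-rxy_0}$ to get $|h_s(\mu)|\leq\|\mu\|_r$ and continuity, check unitality on $\delta_0$, prove multiplicativity on positive measures via the same Transformation of Integrals/Fubini computation as in \eqref{reusefubini}, and extend by bilinearity since $M_r(X)$ is spanned by $M_{r,+}(X)$. No discrepancies.
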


\begin{proof}
It is clear that $h_s$ is linear. Since 
$|h_s(\mu)|\leq \int_X e^{-ry_0x} d|\mu|(x)=\|\mu\|_r$, the linear map $h_s$ 
is continuous. Since $h_s$ is linear and $M_r(X)$ is spanned by positive 
measures, $h_s(\mu_1*\mu_2)=h_2(\mu_1)h_s(\mu_2)$ will hold for arbitrary 
$\mu_1,\mu_2\in M_r(X)$ if we can prove it for positive measures 
$\mu_1,\mu_2$. In the latter case, repeating the calculation leading to 
\eqref{reusefubini}, we obtain
\begin{align*}
h_s(\mu_1*\mu_2) 
 & = \int_X e^{-sx}\,d(\mu_1*\mu_2)(x) \\
 & = \int_X e^{-sx_1}\,d\mu_1(x_1) \int_X e^{-sx_2}\,d\mu_2(x_2) =
     h_s(\mu_1)h_s(\mu_2)\,.
\end{align*}
Since also $h_s(\delta_0)=1$, the map $h_s$ is a homomorphism of unital 
algebras.
\end{proof}

The following consequence will be applied repeatedly.

\begin{lemma}\label{innocuous}
Let $\mu_1,\ldots,\mu_n\in M_r(X)$ and $f\colon \setC^n\supseteq U\to\setC$ 
be a holomorphic function such that $\nu:=f[\mu_1,\ldots,\mu_n]$ is defined 
in $M_r(X)$. Then
\[
 \wt{\nu}(s) = f(\wt{\mu}_1(s),\ldots,\wt{\mu}_n(s))\,,\quad
 \text{for all $\,s\in ry_0+C^\star+i\setR^k$.}
\]
\end{lemma}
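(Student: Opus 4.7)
The plan is to derive this from the naturality of the holomorphic functional calculus (Remark~\ref{remfuncalc}(a)) together with Lemma~\ref{lemmaclear2}. Fix $s\in ry_0+C^\star+i\setR^k$. By Lemma~\ref{lemmaclear2}, the evaluation map $h_s\colon M_r(X)\to\setC$ is a continuous homomorphism of unital algebras. The Banach algebra $M_r(X)$ is commutative, complete and unital, hence in particular a complete commutative continuous inverse algebra, and the target $\setC$ is trivially a complete commutative continuous inverse algebra as well, so the setting of Remark~\ref{remfuncalc}(a) applies.

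Next, by hypothesis $\nu=f[\mu_1,\ldots,\mu_n]$ is defined, which means that the joint spectrum $\sigma(\mu_1,\ldots,\mu_n)$ in $M_r(X)$ is contained in $U$. Applying Remark~\ref{remfuncalc}(a) to $\phi=h_s$ yields
\[
h_s\bigl(f[\mu_1,\ldots,\mu_n]\bigr) \;=\; f\bigl[h_s(\mu_1),\ldots,h_s(\mu_n)\bigr].
\]
On the right-hand side, the functional calculus is being evaluated in the algebra $\setC$, so by Remark~\ref{remfuncalc}(b) it reduces to ordinary evaluation of~$f$, giving $f(h_s(\mu_1),\ldots,h_s(\mu_n))$.

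Finally, I substitute \eqref{istklar}, which identifies $h_s(\mu_j)=\wt{\mu}_j(s)$ for each $j$, and similarly $h_s(\nu)=\wt{\nu}(s)$. Combining these gives
\[
\wt{\nu}(s) \;=\; h_s(\nu) \;=\; f\bigl(\wt{\mu}_1(s),\ldots,\wt{\mu}_n(s)\bigr),
\]
as claimed. The only nontrivial ingredient is the applicability of Remark~\ref{remfuncalc}(a) — that is, verifying that the joint spectrum lies in~$U$ and that $h_s$ is a continuous unital algebra homomorphism — but both points have been handled, the former by the standing assumption that $f[\mu_1,\ldots,\mu_n]$ is defined and the latter by Lemma~\ref{lemmaclear2}. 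No separate estimates or measure-theoretic arguments are needed; the statement is essentially a formal consequence of earlier results.
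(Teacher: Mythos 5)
Your proof is correct and follows the same approach as the paper: apply $h_s$ (a continuous unital algebra homomorphism by Lemma~\ref{lemmaclear2}), invoke naturality of the functional calculus (Remark~\ref{remfuncalc}\,(a)), and identify the result with pointwise evaluation via Remark~\ref{remfuncalc}\,(b) and \eqref{istklar}. You have merely spelled out in more detail the chain of equalities that the paper gives compactly.
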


\begin{proof}
Using Lemma~\ref{lemmaclear2} and the facts compiled in 
Remark~\ref{remfuncalc}, we obtain $\wt{\nu}(s)=h_s(\nu)=
h_s(f[\mu_1,\ldots,\mu_n])=f[h_s(\mu_1),\ldots, h_s(\mu_n)]
=f(\wt{\mu}_1(s),\ldots,\wt{\mu}_n(s))$.
\end{proof}

%%%%%%%%%%%%%%%%%%%%%%%%%%%%%%%%%%%%%%%%%%%%%%%%%%%%%%%%%%%%%%%%%%%%%%%%%%%%%

\section{Proof of Theorem \ref{Tnew}} \label{S6}

%%%%%%%%%%%%%%%%%%%%%%%%%%%%%%%%%%%%%%%%%%%%%%%%%%%%%%%%%%%%%%%%%%%%%%%%%%%%%

The proof of Theorem \ref{Tnew} relies on the following lemma (where $A_r$ is 
as in the theorem). Given $\mu\in A_r$ and $t\geq r$, we write 
$\sigma_t(\mu)$ for the spectrum of~$\mu$ in the commutative Banach algebra 
$A_t$ and, likewise, $\sigma_t(\mu_1,\ldots,\mu_n)$ for joint spectra 
in $A_t$. Given $\ve>0$ and $z\in \setC$, let 
$B_\ve(z):=\{w\in\setC:|w-z|<\ve\}$.

\begin{lemma}\label{Lnew}
If $\mu\in A_r$, then
\[
\|\mu-\wt{\mu}(\infty)\delta_0\|_t\to 0\quad\text{as $t\to\infty$.}
\]
For every $\ve>0$, there exists $t_0\geq r$ such that
$\sigma_t(\mu)\subseteq B_\ve(\wt{\mu}(\infty))$ for all $t\geq t_0$.
\end{lemma}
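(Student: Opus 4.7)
The plan is to derive both claims from a single dominated-convergence estimate on the $t$-norm of $\mu-\wt{\mu}(\infty)\delta_0$, followed by a standard Neumann-series argument in the unital Banach algebra $A_t$.

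For the first assertion, I use the identity $\wt{\mu}(\infty)=\mu(\{0\})$ and set $\nu:=\mu-\mu(\{0\})\delta_0$. Then $\nu$ agrees with $\mu$ on every Borel subset of $X\setminus\{0\}$ and satisfies $\nu(\{0\})=0$; hence $|\nu|$ coincides with the restriction of $|\mu|$ to $X\setminus\{0\}$ and places no mass on $\{0\}$. Consequently
$$
\|\mu-\wt{\mu}(\infty)\delta_0\|_t \;=\; \int_{X\setminus\{0\}} e^{-txy_0}\,d|\mu|(x).
$$
For $t\geq r$ the integrand is bounded by $e^{-rxy_0}$, which is $|\mu|$-integrable because $\|\mu\|_r<\infty$. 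Since $y_0\in (C^\star)^\circ$ and $X\setminus\{0\}\subseteq C\setminus\{0\}$, one has $xy_0>0$ for each $x\in X\setminus\{0\}$, so $e^{-txy_0}\to 0$ pointwise as $t\to\infty$. Lebesgue's dominated convergence theorem then gives $\|\mu-\wt{\mu}(\infty)\delta_0\|_t\to 0$.

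For the second assertion, given $\ve>0$ I would use the first part to choose $t_0\geq r$ with $\|\mu-\wt{\mu}(\infty)\delta_0\|_t<\ve$ for all $t\geq t_0$. For any $z\in\setC$ with $|z-\wt{\mu}(\infty)|\geq\ve$, decompose
$$
\mu-z\delta_0 \;=\; -(z-\wt{\mu}(\infty))\,\delta_0 \;+\; (\mu-\wt{\mu}(\infty)\delta_0),
$$
and factor out the invertible scalar multiple $-(z-\wt{\mu}(\infty))\delta_0$ in the unital Banach algebra $A_t$. The remaining factor is of the form $\delta_0-b$ with
$$
\|b\|_t \;=\; \frac{\|\mu-\wt{\mu}(\infty)\delta_0\|_t}{|z-\wt{\mu}(\infty)|} \;<\; 1,
$$
so it is invertible in $A_t$ by the Neumann series. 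Hence $\mu-z\delta_0$ is invertible in $A_t$, and consequently $z\notin\sigma_t(\mu)$. This yields $\sigma_t(\mu)\subseteq B_\ve(\wt{\mu}(\infty))$ for every $t\geq t_0$.

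The only delicate point is the identification of $|\mu-\mu(\{0\})\delta_0|$ with the restriction of $|\mu|$ to $X\setminus\{0\}$; once this is in hand, both statements reduce to standard ingredients (dominated convergence and the Neumann series), with the strict positivity $xy_0>0$ on $X\setminus\{0\}$ doing the essential work.
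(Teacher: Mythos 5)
Your proof is correct and follows essentially the same route as the paper: dominated convergence with integrable majorant $e^{-rxy_0}$ for the first assertion (after identifying $|\mu-\wt{\mu}(\infty)\delta_0|$ with $|\mu|$ off $\{0\}$, which the paper states as $|\mu-\wt{\mu}(\infty)\delta_0|=|\mu|-|\mu|(\{0\})\delta_0$), and the spectral-radius-$\leq$-norm bound for the second. The paper cites the latter fact from Rudin, whereas you re-derive it from scratch via the Neumann series -- the same underlying argument, just written out.
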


\begin{proof}
If $\|\mu-\wt{\mu}(\infty)\delta_0\|_t<\ve$, then
$\sigma_t(\mu-\wt{\mu}(\infty)\delta_0)\subseteq B_\ve(0)$ (see 
\cite[Corollary 3 to Theorem 18.4]{Rud1987}) and thus
$\sigma_t(\mu)\subseteq B_\ve(\wt{\mu}(\infty))$. Hence the second assertion 
follows if we can prove the first. To this end, let $(t_n)_{n\in \setN}$ be 
a sequence of real numbers $t_n\geq r$ such that $t_n\to\infty$ as 
$n\to\infty$. Since 
$|\mu-\wt{\mu}(\infty)\delta_0| =|\mu|-|\mu|(\{0\})\delta_0$, Lebesgue's 
Dominated Convergence Theorem shows that
\begin{align*}
\|\mu-\wt{\mu}(\infty)\delta_0\|_{t_n}
 & =   \int_X e^{-t_nxy_0}\,d|\mu|(x)-
       \int_X e^{-t_nxy_0}\,|\mu|(\{0\})\,d\delta_0(x) \\
 & =   \int_{X\setminus \{0\}} e^{-t_nxy_0}\,d|\mu|(x)\\
 & \to 0 \quad \text{as $n\to\infty$,}
\end{align*}
using that $e^{-t_nxy_0}\to 0$ as $n\to\infty$ for each 
$x\in X\setminus\{0\}$ and $e^{-t_nxy_0}\leq e^{-rxy_0}$, where 
$\int_{X\setminus\{0\}}e^{-rxy_0}\,d|\mu|(x)\leq\|\mu\|_r<\infty$.
\end{proof}

\begin{proof}[Proof of Theorem {\rm\ref{Tnew}}]
Existence of $\mu$:
By the Implicit Function Theorem, there exist open neighborhoods 
$V_0\subseteq V$ of~$v_0$ and $Z_0\subseteq Z$ of~$z_0$ such that
\begin{equation}\label{Enew1}
\{(v,z)\in V_0\times Z_0:F(v,z)=0\}=\text{graph}(\phi)
\end{equation}
is the graph of a holomorphic function
\[
\phi=(\phi_1,\ldots,\phi_m)\colon V_0\to Z_0
\]
with $\phi(v_0)=z_0$. After shrinking $V_0$, we may assume that
\[
\phi_1(V_0)\times\cdots\times \phi_m(V_0)\subseteq Z_0\,.
\]
Let $r\in\setR$ be such that $\lambda_1,\ldots,\lambda_n\in A_r$. Since 
$V_0$ is an open neighborhood of 
$(\wt{\lambda}_1(\infty),\ldots,\wt{\lambda}_n(\infty))$,
Lemma~\ref{Lnew} implies that, after increasing~$r$ if necessary, we have
\[
\sigma_t(\lambda_1)\times\cdots\times \sigma_t(\lambda_n)\subseteq V_0\quad
\text{for all $\,t\geq r$.}
\]
Hence
\[
\mu_j := \phi_j[\lambda_1,\ldots,\lambda_n]\in A_r
\]
can be defined using functional calculus in~$A_r$, for $j\in \{1,\ldots,m\}$.
Set $\mu:=(\mu_1,\ldots,\mu_m)$. By the Spectral Mapping Theorem
\cite[Corollary~4.10]{Bil2004},
\[
 \sigma_r(\mu_j)=\phi_j(\sigma_r(\lambda_1,\ldots,\lambda_n))
 \subseteq\phi_j(V_0)\,.
\]
Then $\sigma_r(\lambda_1,\ldots,\lambda_n,\mu_1,\ldots,\mu_m)\subseteq 
\sigma_r(\lambda_1)\times\cdots\times\sigma_r(\lambda_n)\times
\sigma_r(\mu_1)\times\cdots\times\sigma_r(\mu_m)$ $\subseteq V_0\times Z_0$.
Moreover,
\begin{equation}\label{Enew2}
F[\lambda_1,\ldots,\lambda_n,\mu_1,\ldots,\mu_m]
= \bigl(F\circ (\id_{V_0},\phi)\bigr)[\lambda_1,\ldots,
\lambda_n]\,=\, 0
\end{equation}
by the Spectral Mapping Theorem \cite[Corollary~4.10]{Bil2004}, since 
$F\circ (\id_{V_0},\phi)=0$ by (\ref{Enew1}).

\noindent
Using Lemma~\ref{innocuous}, we deduce from \eqref{Enew2} that
\[
0 = F[\lambda_1,\ldots,\lambda_n,\mu_1,\ldots,\mu_m]\upwt(s) =
    F(\wt{\lambda}_1(s),\ldots,\wt{\lambda}_n(s),\wt{\mu}_1(s),\ldots,
    \wt{\mu}_m(s)),
\]
for each $s\in ry_0+C^\star+i\setR^k$. Thus \eqref{cond1new}, 
\eqref{cond2new} from Theorem~\ref{Tnew} hold. In view of \eqref{Enew1}, the 
preceding equality also implies \eqref{cond0new}.

Uniqueness of~$\mu$: Let $\nu=(\nu_1,\ldots,\nu_m)\in (A_t)^m$ for some 
$t\in\setR$ such that \eqref{cond0new}, \eqref{cond1new} and \eqref{cond2new} 
hold, with $\mu$ replaced by~$\nu$ and~$r$ replaced by~$t$. After 
increasing~$r$ or~$t$, we may assume that $r=t$. After increasing~$r$ further 
if necessary, we can achieve (using Lemma~\ref{Lnew}) that
\begin{equation}\label{Enew4}
\sigma_r(\nu_1)\times\cdots\times\sigma_r(\nu_m)\subseteq Z_0.
\end{equation}
Hence
\[
\zeta := F[\lambda_1,\ldots,\lambda_n,\nu_1,\ldots,\nu_m]
\]
can be defined in~$A_r$. For each $s\in ry_0+C^\star+i\setR^k$, we
have
\begin{equation}\label{Enew5}
 \wt{\zeta}(s) =
 F(\wt{\lambda}_1(s),\ldots,\wt{\lambda}_n(s),\wt{\nu}_1(s),\ldots,
 \wt{\nu}_m(s))= 0\,,
\end{equation}
by hypothesis and Lemma~\ref{innocuous}. In view of \eqref{Enew1}, 
\eqref{Enew4} and \eqref{Enew5}, we have
$(\wt{\nu}_1(s),\ldots,\wt{\nu}_m(s))=\phi(s)
=(\wt{\mu}_1(s),\ldots,\wt{\mu}_m(s))$ for each $s\in ry_0+C^\star+i\setR^k$. 
Hence $\nu=\mu$, by Lemma~\ref{propslapl}\,(d).

Proof of the final assertion: Assume that
$(\wt{\nu}_1(\infty),\ldots,\wt{\nu}_m(\infty))=z_0$ and
\begin{equation}\label{Enew3}
F[\lambda_1,\ldots,\lambda_n,\nu_1,\ldots,\nu_m] = 0
\end{equation}
in some~$A_t$. After increasing~$r$ or~$t$, we may again assume that $t=r$. 
Applying $h_s$ to \eqref{Enew3} for $s\in ry_0+C^\star+i\setR^k$, we see that 
\eqref{cond1new} and \eqref{cond2new} hold. Hence $\nu=\mu$, by what has just 
been shown.
\end{proof}

%%%%%%%%%%%%%%%%%%%%%%%%%%%%%%%%%%%%%%%%%%%%%%%%%%%%%%%%%%%%%%%%%%%%%%%%%%%%%

%%%%%%%%%%%%%%%%%%%%%%%%%%%%%%%%%%%%%%%%%%%%%%%%%%%%%%%%%%%%%%%%%%%%%%%%%%%%%

\end{document}